\numberwithin{equation}{section}
\newcommand{\nc}{\newcommand}
\newcommand{\rnc}{\renewcommand}
\nc{\R}{\mathbb R}
\nc{\C}{\mathbb C}
\nc{\N}{\mathbb N}
\nc{\Z}{\mathbb Z}
\nc{\Q}{\mathbb Q}
\rnc{\P}{\mathbb P}
\nc{\F}{\mathbb F}
\nc{\Frac}{\mathrm{Frac}}
\rnc{\O}{\mathcal O}
\nc{\Tr}{\mathrm{Tr}}
\nc{\rmU}{\mathrm{U}}
\nc{\an}{\textnormal{an}}
\newcommand\cA{{\mathcal A}}
\newcommand\cB{{\mathcal B}}
\newcommand\cC{{\mathcal C}}
\newcommand\cD{{\mathcal D}}
\newcommand\cG{{\mathcal G}}
\newcommand\cO{{\mathcal O}}
\newcommand\cS{{\mathcal S}}
\newcommand\cV{{\mathcal V}}
\newcommand\cX{{\mathcal X}}
\newcommand\cY{{\mathcal Y}}
\newcommand\cZ{{\mathcal Z}}
\newcommand{\Aut}{\textnormal{Aut}}
\newcommand{\td}{\textnormal{td}}
\newcommand\cGamma{{\mathit \Gamma}}
\newcommand\bV{{\mathbb V}}
\newcommand\bW{{\mathbb W}}
\newcommand\bfG{{\mathbf G}}
\newcommand\bfH{{\mathbf H}}
\newcommand\bfN{{\mathbf N}}
\newcommand{\GL}{\mathbf{GL}}
\newcommand{\Hom}{\mathrm{Hom}}
\newcommand{\codim}{\mathrm{codim}}
\newcommand{\typ}{\mathrm{typ}}
\newcommand{\atyp}{\mathrm{atyp}}
\newcommand{\maxatyp}{\mathrm{max.atyp}}
\newcommand{\pos}{\mathrm{pos}}
\newcommand{\zero}{\mathrm{zero}}
\newcommand{\HL}{\mathrm{HL}}
\newcommand{\oQ}{\overline{\Q}}
\newcommand{\lo}{\rightarrow}
\newcommand{\ad}{\mathrm{ad}}
\newcommand{\dom}{\mathrm{dom}}
\newcommand{\Gr}{\textnormal{Gr}}
\newcommand{\Ad}{\textnormal{Ad}}
\renewcommand{\P}{\mathbb P}
\theoremstyle{plain}
\newtheorem{theorem}{Theorem}[section]
\newtheorem{proposition}[theorem]{Proposition}
\newtheorem{lemma}[theorem]{Lemma}
\newtheorem{corollary}[theorem]{Corollary}
\newtheorem{conjecture}[theorem]{Conjecture}
\theoremstyle{definition}
\newtheorem{definition}[theorem]{Definition}
\theoremstyle{remark}
\newtheorem{remarque}[theorem]{Remark}
\title[Zilber-Pink-conjecture]{The Zilber-Pink conjecture for varieties not defined over $\overline{\Q}$}
\author{Bruno Klingler}
\address{Humboldt Universität zu Berlin, Department of mathematics, Rudower Chaussée 25, Room 1.403}
\email{bruno.klingler@hu-berlin.de}
\author{Salim Tayou}
\address{Department of Mathematics, Dartmouth College, Kemeny Hall, Hanover, NH 03755, USA}
\email{salim.tayou@dartmouth.edu}
\date{\today}
\begin{document}

\begin{abstract}

    In this note, we prove the Zilber--Pink conjecture for  subvarieties of mixed Shimura varieties, which are not defined over~$\oQ$ in a strong sense. We prove similar results for general variations of mixed Hodge structure of geometric origin, assuming furthermore that they are absolute.
\end{abstract}

\maketitle
\setcounter{tocdepth}{1}
\tableofcontents

\section{Introduction} \label{introduction}

\subsection{The Zilber-Pink conjecture for variations of mixed Hodge structures}
Let $X$ be a smooth quasi-projective algebraic variety over $\C$ and let $\bV=\{\bV_\Z,W_\bullet,F^\bullet\}$ be a variation of mixed Hodge structures \footnote{In this paper, all $\Z$VMHS are assumed to be admissible graded-polarizable} ($\Z$VMHS) over $X$. The Hodge locus $\HL(X, \bV^\otimes)$ of the variation $\bV$ is the subset of points~$x \in X(\C)$ where the Hodge structure $\bV_x$ admits "more Hodge tensors" than the very general fiber of $\bV$. 
When $\bV$ is of the form $\bV = R^ig_* \Z$ for some flat surjective equisingular morphism $g: Z \rightarrow X$, the Hodge conjecture predicts that the locus $\HL(X, \bV^\otimes)$ is a countable union of irreducible algebraic subvarieties of $X$: {\em the special subvarieties of $X$ for $\bV$}. A famous result of Cattani-Deligne-Kaplan \cite{cattani-deligne-kaplan} when $\bV$ is pure (see also \cite{bakker-klingler-tsimerman}), generalized to the mixed case in \cite{brosnan-pearlstein}, \cite{brosnan-pearlstein-schnell}, \cite{BBKT}, states that this holds true unconditionally for a general $\Z$VMHS $\bV$. 

Let $(\bfG,\cD)$ be the generic mixed Hodge datum of $\bV$ (thus $\bfG$ is the generic Mumford-Tate group of $\bV$ and $\cD$ the associated mixed Mumford-Tate domain, see \cite{klingler} for instance). The $\Z$VMHS $\bV$ is completely described by the complex analytic period map
\[\varphi:X^\an \rightarrow \Gamma\backslash \cD~,\] where $\Gamma\subset \bfG(\Q)$ is an arithmetic subgroup and $X^\an$ is the complex manifold analytification of $X$. In general, the complex locally homogeneous space $\Gamma\backslash \cD$ does not admit any algebraic structure. The special subvarieties $Z \subset X$ for $\bV$ are precisely the analytic irreducible components of $\varphi^{-1} (\Gamma_Z \backslash \cD_Z)$, where $(\bfG_Z, \cD_Z) \subsetneq (\bfG, \cD)$ is the generic Hodge datum of $\bV_{|Z}$, see \cite{klingler}, \cite{klingler-otwinowska}.  The content of the Cattani-Deligne-Kaplan theorem is that, although $\Gamma\backslash \cD$ and the period map $\varphi$ are not algebraic objects in general, these analytic irreducible components of $\varphi^{-1} (\Gamma_Z \backslash \cD_Z)$ are algebraic subvarieties of $S$. We will also write $\HL(X, \varphi)$ for the Hodge locus $\HL(X,\bV^\otimes)$.

\begin{remarque}
    When $\Gamma\backslash \cD$ admits an algebraic structure, the $\Z$VMHS $\bV$ is said of mixed Shimura type. In that case $\Gamma \backslash \cD$ is the analytification of a (connected) mixed Shimura variety $\cS$ and the period map $\varphi: X \lo \cS$ is algebraic.
\end{remarque}

The distribution of the special subvarieties of $X$ for $\bV$ has been the focus of much work in the last ten years. Using the period map associated to $\bV$, Klingler \cite[Def.4.3]{klingler}, generalized by Baldi-Klingler-Ullmo \cite[Def.5.1 and 5.2]{baldi-klingler-ullmo}, introduced a fundamental dichotomy: a special subvariety is said {\em typical} if its image under the period map has the dimension predicted by intersection theory:
\[ \codim_{\Phi(X^\an)} \, \Phi(Z^{\an}) = \codim_{\Gamma \backslash \cD} \,\Gamma_{Z} \backslash \cD_Z,\]
and is said {\em atypical} otherwise. This gives rise to a decomposition:
\[\HL(X, \bV^\otimes)=\HL(X, \bV^\otimes)_{\typ}\cup \HL(X, \bV^\otimes)_{\atyp},\]
where the typical Hodge locus $\HL(X, \bV^\otimes)_{\mathrm{typ}}$ is the countable union of the typical special subvarieties of $X$ for $\bV$ and the atypical Hodge locus $\HL(X, \bV^\otimes)_{\mathrm{atyp}}$ is the countable union of the atypical ones. In \cite[1.5]{klingler}, generalized by \cite[Conjecture 2.5]{baldi-klingler-ullmo}, the first author proposed the following conjecture, which extends to any $\Z$VMHS the classical Zilber-Pink conjecture for mixed Shimura varieties.

\begin{conjecture}[Zilber--Pink for $\Z$VMHS] \label{ZP}
For any smooth irreducible quasi-projective algebraic variety $X$ over $\C$ endowed with an admissible graded-polarizable variation of mixed Hodge structures $\bV$, the atypical Hodge locus $\HL(X, \bV^\otimes)_{\mathrm{atyp}}$ is not Zariski-dense in $X$. 
Equivalently: $\HL(X, \bV^\otimes)_{\mathrm{atyp}}$ is a union of finitely many maximal atypical special subvarieties of $X$ for $\bV$. 
\end{conjecture}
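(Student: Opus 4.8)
The plan is to follow the Pila--Zannier strategy --- as developed for Shimura varieties by Pila, Habegger--Pila, Daw--Ren, Gao and others, and transported to variations of Hodge structure by Klingler, Klingler--Otwinowska and Baldi--Klingler--Ullmo. By Noetherianity of the Zariski topology it is enough to show that $\HL(X,\bV^\otimes)_{\atyp}$ is a finite union of maximal atypical special subvarieties. Introducing the notion of an \emph{optimal} subvariety --- an irreducible component of $\varphi^{-1}(Y)$ for a special subvariety $Y\subset\Gamma\backslash\cD$ which is not contained in a strictly larger subvariety of the same or smaller Hodge-theoretic defect --- one reduces this, by a formal argument already present in the literature, to bounding the optimal atypical special subvarieties of $X$.

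I would then split according to period dimension. For the atypical $Z$ on which $\varphi$ is \emph{non-constant}, the Ax--Schanuel theorem for the period map of an admissible graded-polarizable $\Z$VMHS (Bakker--Tsimerman in the pure case, together with its mixed extension) and the argument of Baldi--Klingler--Ullmo show that their union is already a finite union of maximal atypical special subvarieties. There remains the atypical locus of \emph{zero period dimension}: the atypical $Z$ on which $\varphi$ is constant, so that $Z$ is an irreducible component of a fibre $\varphi^{-1}(s)$ over an atypical \emph{special point} $s\in\varphi(X^\an)\subset\Gamma\backslash\cD$ (a point at which the Mumford--Tate group is a torus). It suffices to bound the set of such $s$.

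For this I would run the counting argument. By Bakker--Klingler--Tsimerman the period map is definable: fixing a semialgebraic fundamental set $\fF\subset\cD$ for $\Gamma$, there is an $\R_{\mathrm{an},\exp}$-definable lift $\widetilde\varphi$ of $\varphi$ with values in $\fF$. Attach to each special point $s$ in $\varphi(X^\an)$ a \emph{complexity} $\Delta(s)\in\N$ recording the arithmetic size of its Mumford--Tate torus (discriminant and conductor of the associated order) together with the denominator of a representative of $s$ in $\fF$. One then combines two inputs. \emph{Counting:} by the Pila--Wilkie theorem applied to the definable set cut out by $\widetilde\varphi$ and the special subdomains of $\cD$, for every $\varepsilon>0$ the number of atypical special points of complexity $\le\Delta$ whose $\fF$-representatives avoid every positive-dimensional semialgebraic piece of that set is $\ll_\varepsilon\Delta^\varepsilon$. \emph{Lower bound:} each atypical special point $s$ of complexity $\Delta$ has $\gg\Delta^\delta$ distinct conjugates under the relevant Galois/reciprocity action, for a fixed $\delta>0$, all of comparable complexity and all lying in $\varphi(X^\an)$. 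Choosing $\varepsilon<\delta$ forces, once $\Delta(s)$ is large, some conjugate of $s$ to have its $\fF$-representative on a positive-dimensional semialgebraic piece of that definable set; the Ax--Lindemann theorem (a consequence of Ax--Schanuel) then turns this piece into a positive-dimensional weakly special subvariety, whence every atypical special subvariety of sufficiently large complexity lies in a proper special subvariety of $X$. An induction on $\dim X$ finishes the proof.

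The serious obstacle is the \emph{lower bound}: a Galois-orbit estimate for atypical special points of a general $\Z$VMHS is exactly the input that keeps the conjecture open. When $(X,\bV)$ is defined over $\oQ$ this is the Andr\'e--Oort-type estimate, known unconditionally only for restricted Hodge data (Tsimerman's bound via the averaged Colmez conjecture in the abelian-type case, and its descendants). When $\bV$ is of geometric origin over a base that is \emph{not} defined over $\oQ$, one may spread $X$ and $\bV$ out over a variety over $\oQ$ and try to feed the number-field case over the generic fibre into a monodromy-plus-transcendence argument controlling the extra special points produced by the transcendental parameters --- this is the route that actually succeeds in favourable situations --- but making it unconditional for arbitrary $\bV$ is the hard part. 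A preliminary, more elementary point to settle is that a completely general admissible graded-polarizable $\Z$VMHS over $\C$ must first be descended to a base of finite transcendence degree for any arithmetic statement to make sense, which needs a separate specialization argument. The remaining ingredients --- the reduction to optimal subvarieties, Ax--Schanuel and Ax--Lindemann for mixed period maps, definability of $\varphi$, and Pila--Wilkie counting --- are by now available essentially off the shelf.
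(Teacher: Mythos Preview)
The statement you are attempting to prove is labeled a \emph{Conjecture} in the paper, and the paper does not claim to prove it in full generality. You yourself correctly identify the genuine gap: the Galois-orbit lower bound for atypical special points of a general $\Z$VMHS is unavailable, and this is precisely why \Cref{ZP} remains open. So as a proof of the full conjecture your proposal is, by your own admission, incomplete.

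What the paper actually does is different from the Pila--Zannier route you sketch. Rather than counting special points via definability and Galois orbits, the paper proves \Cref{ZP} under the extra hypothesis that no non-trivial quotient of the period map factorizes over $\oQ$ (\Cref{main-shimura} and \Cref{main-absolute}), and shows that the general geometric case reduces to the $\oQ$-case (\Cref{main-geometric}). The method is a \emph{spreading argument with induction on transcendence degree}: given $X$ defined over an algebraically closed $K$ of transcendence degree $n\geq 1$ over $\oQ$, one spreads $X$ to a family $\cX\to\cB$ over an algebraically closed $L\subset K$ of transcendence degree $n-1$. A maximal atypical special point $x\in X$ of zero period dimension then lies in an atypical special subvariety $Z_x\subset\cX$ which, after discarding a non-Zariski-dense set, has \emph{positive} period dimension in $\cX$; these are controlled by the geometric Zilber--Pink theorem (\Cref{finiteness}) of Baldi--Klingler--Ullmo and Baldi--Urbanik. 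Passing to the relevant quotient period map over $L$ and applying the induction hypothesis yields a contradiction.

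You allude to this spreading idea in one sentence, but the paper's point is sharper than you indicate: when no quotient of $\varphi$ factorizes over $\oQ$, the spreading argument alone --- with no Galois-orbit input whatsoever --- already suffices, because the induction on transcendence degree terminates in a contradiction before one ever reaches the arithmetic $\oQ$-case. This completely sidesteps Pila--Wilkie and the missing lower bound, at the price of the hypothesis on the field of definition. Your outline, by contrast, would need the lower bound even in the non-$\oQ$ case, which is strictly more than what is required.
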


\begin{remarque}
    The definition of atypical subvarieties in \cite{baldi-klingler-ullmo} is more general than the one in \cite{klingler}, see \cite[Section 5.3]{baldi-klingler-ullmo} for the comparison. The equivalence mentioned in \Cref{ZP} is proven in \cite[Prop. 5.1]{klingler} using the restricted definition of atypicality. The proof extends unchanged to the more general definition.
\end{remarque}

When $\bV$ is of mixed Shimura type, the algebraic period map \[ \varphi: X \lo \cS\] decomposes as $\varphi = \iota \circ f$, where $Y \stackrel{\iota}{\hookrightarrow} \cS$ denotes the Zariski closure of the constructible set $\varphi(X)$ in the mixed Shimura variety $\cS$ and $f: X \lo Y$ the natural dominant projection. Although $Y$ is usually singular, we still call $\iota: Y \hookrightarrow \cS$ a period map and define $\HL(Y, \iota)$ and $\HL(Y, \iota)_\atyp$ as before. In particular $\HL(X, \varphi)_\atyp = f^{-1}(\HL(Y, \iota)_\atyp)$. Hence \Cref{ZP} for $\Z$VMHS of Shimura type reduces to the classical Zilber-Pink conjecture for mixed Shimura varieties \cite{Zilber}, \cite{Pink}.

\begin{conjecture} (Zilber-Pink)\label{ZP-shimura}
Let $X \stackrel{\iota}{\hookrightarrow} \cS$ be a closed irreducible algebraic subvariety of a mixed Shimura variety $\cS$. Then $\HL(X, \iota)_\atyp$ is not Zariski-dense in $X$.
\end{conjecture}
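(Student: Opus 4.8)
The plan is to establish \Cref{ZP-shimura} under the hypothesis, in force throughout, that $X$ is \emph{not} defined over $\oQ$. After two standard reductions we may assume that $X$ is irreducible of positive dimension and, replacing $\cS$ by the smallest special subvariety containing $X$ (again a mixed Shimura variety defined over $\oQ$), that $X$ is Hodge-generic in $\cS$; then $X\subsetneq\cS$, since $\cS$ is defined over $\oQ$ while $X$ is not. The positive-dimensional part is imported from outside: by the geometric part of the Zilber--Pink conjecture \cite{baldi-klingler-ullmo}, the union of the positive-dimensional atypical special subvarieties of $X$ is a finite union of maximal ones, hence a proper Zariski-closed subset. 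It therefore suffices to bound the union $B$ of the \emph{zero}-dimensional atypical special subvarieties of $X$: the points $x\in X$ that are Hodge-generic and isolated in $X\cap\cS_x$, where $\cS_x$ denotes the special closure of $x$, atypicality forcing $\dim X<\codim_\cS\cS_x$. We must show $B$ is not Zariski-dense in $X$.

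The heart of the matter is a spreading-out trick that converts these arithmetically flavoured zero-dimensional intersections into positive-dimensional ones, to which the geometric Zilber--Pink theorem does apply. Let $k\subseteq\C$ be the \emph{minimal} field of definition of $X$; it is finitely generated over $\Q$ and $d:=\operatorname{trdeg}_\Q k\geq1$. Choose a subfield $k'\subseteq k$, algebraically closed in $k$, with $\operatorname{trdeg}_\Q k'=d-1$, so that $k$ is the function field of a smooth projective curve $T_0$ over $k'$, and spread $X$ out to a flat family $g\colon\mathcal X\to T_0$ with $\mathcal X\subseteq\cS\times T_0$ defined over $k'$ and generic fibre $X$. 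Give $\cS\times T_0$ the variation $\operatorname{pr}_\cS^\ast\bV$, $\bV$ the canonical $\Z$VMHS of $\cS$; its period map is (essentially) $\operatorname{pr}_\cS$, so the special subvarieties of $\mathcal X$ for $\operatorname{pr}_\cS^\ast\bV$ are the components of $\operatorname{pr}_\cS^{-1}(\cS')\cap\mathcal X$ for $\cS'\subseteq\cS$ special. For $x\in B$, its Zariski closure $\mathcal P_x\subseteq\mathcal X$ is a multisection of $g$; since $x$, a point of the generic fibre, lies in $\cS_x$, we get $\mathcal P_x\subseteq\cS_x\times T_0$, so $C_x:=\operatorname{pr}_\cS(\mathcal P_x)\subseteq\cS_x$. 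Hence a dichotomy: either $\dim C_x=0$ — equivalently, $x$ has coordinates in $\overline{k'}$ — or $\dim C_x=1$.

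Suppose the $x\in B$ with $\dim C_x=0$ were Zariski-dense in $X$. Then $X(\overline{k'})$ is Zariski-dense, so $X$ is stable under $\Aut(\C/\overline{k'})$, hence defined over $\overline{k'}$, a field of transcendence degree $d-1$ over $\Q$, contradicting the minimality of $k$. (This case absorbs the ordinary special points of $\cS$ lying on $X$, defined over number fields.) For the remaining $x$, with $\dim C_x=1$, one checks that $\mathcal P_x$ is a special subvariety of $\mathcal X$ for $\operatorname{pr}_\cS^\ast\bV$ of positive period dimension whose special closure has codimension $\geq\codim_\cS\cS_x$ in $\cS$; since $\dim\operatorname{pr}_\cS(\mathcal X)\leq\dim\mathcal X=\dim X+1$ and $\codim_\cS\cS_x\geq\dim X+1$,
\[ \codim_{\operatorname{pr}_\cS(\mathcal X)}\operatorname{pr}_\cS(\mathcal P_x)\;\leq\;\dim X\;<\;\codim_\cS\cS_x\;\leq\;\codim\big(\text{special closure of }\mathcal P_x\big), \]
so $\mathcal P_x$ is \emph{atypical} in $\mathcal X$. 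By the geometric Zilber--Pink theorem applied to $\mathcal X$ (over $\C$), all such $\mathcal P_x$ lie in a fixed proper closed $Z\subsetneq\mathcal X$; restricting to the fibre where $\mathcal X_{t_0}=X$ and using Hodge-genericity of $X$ to see $X\not\subseteq Z$, the set $\{x\in B:\dim C_x=1\}$ lies in the proper closed subset $Z\cap X$. Thus $\HL(X,\iota)_\atyp$ is the union of three proper Zariski-closed subsets of the irreducible variety $X$, so it is not Zariski-dense.

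The main obstacle — and the only place the hypothesis "$X$ not defined over $\oQ$" is used essentially — is making the spreading-out work: the atypicality inequality above hinges on $\dim\mathcal X=\dim X+1$, i.e.\ on spreading out a \emph{single} transcendental parameter at a time, and the branch $\dim C_x=0$ is then disposed of precisely by descending one unit in transcendence degree and invoking the minimality of $k$. For the variation-of-Hodge-structure version of the theorem the same strategy works, but the period map is no longer a closed immersion: a special subvariety of zero period dimension can be positive-dimensional (it sits in a fibre of the period map over a CM point), and controlling these requires $\bV$ to be \emph{absolute}, which forces the relevant CM locus to be defined over $\oQ$.
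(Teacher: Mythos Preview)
Your overall architecture---spread out over a curve defined over a smaller field, split into the ``constant'' and ``moving'' dichotomy, and feed the moving case into geometric Zilber--Pink for the total space---is exactly the paper's strategy in the proof of \Cref{shimura-zero}. The gap is in how you invoke the geometric theorem, and it occurs twice.

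You write that by \cite{baldi-klingler-ullmo} ``the union of the positive-dimensional atypical special subvarieties of $X$ is a finite union of maximal ones'', and later that ``all such $\mathcal P_x$ lie in a fixed proper closed $Z\subsetneq\mathcal X$''. Neither statement is what the geometric theorem gives. What \Cref{finiteness} yields is finitely many \emph{families} $(f_i:Z_i\to Y_i,\ \pi_i:Z_i\to\cX)$ of maximal monodromy atypical subvarieties. When $\pi_i$ is not dominant you do get a proper closed subset; but there may well be \emph{dominant} families, whose fibres sweep out all of $\cX$. The special fibres of such a dominant family $f_i$ are precisely the preimages of the maximal atypical special \emph{points} for the quotient period map $\cX\to\cS\twoheadrightarrow\cS'_i$ obtained by modding out the fibre monodromy $\bfN_i$ (this is the content of the proof of \Cref{reduction}). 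There can be infinitely many of these, and nothing in geometric Zilber--Pink prevents their union from being Zariski-dense. Concretely: if $X=C\times\cS_2\subset\cS_1\times\cS_2$ with $\dim\cS_1\ge2$, then $\{c\}\times\cS_2$ is a positive-dimensional atypical special subvariety for every special point $c\in C$; bounding these is André--Oort for $C$, not geometric Zilber--Pink.

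This is exactly why the paper's hypothesis in \Cref{main-shimura} is ``no non-trivial image of $X$ in a quotient $\cS'$ is defined over $\oQ$'' rather than merely ``$X$ is not defined over $\oQ$'': after spreading out and hitting a dominant family, one is forced to pass to a quotient $\cS'$, where the problem becomes Zilber--Pink for the image $X'\subset\cS'$. The paper closes the loop by \emph{induction on the transcendence degree}: one shows $X'=\cX'$ is defined over the smaller field $L$, and then either $L=\oQ$ (contradicting the hypothesis on quotients) or one applies the induction hypothesis to $X'$. Your argument spreads out only once and does not carry the quotient hypothesis, so it cannot complete this step. With the stronger hypothesis on quotients and the inductive structure added, your sketch becomes the paper's proof.
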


For a general period map $\varphi: X^\an \lo \Gamma \backslash \cD$ with a non-algebraic target, the picture above remains partially true. According to \cite{bakker-brunebarbe-tsimerman-pure-image}, \cite{bakker-brunebarbe-tsimerman-mixed-image}, the image of the period map $\varphi$ factorizes (uniquely up to unique isomorphism) as: 
\[ \varphi : X^\an \lo W^\an \hookrightarrow \Gamma \backslash D~,\] where $W$ (the {\em image} of the period map) is a (usually singular) irreducible quasi-projective variety, the algebraic morphism $X\lo W$ is dominant and $W^\an \hookrightarrow \Gamma \backslash \cD$ is a closed analytic immersion. To take care of fields of definition it will be convenient for us to consider the normalization $Y$ of $W$ rather than $W$ itself.

\begin{definition} \label{def0}
We call 
\[ \varphi : X^\an \stackrel{f^\an}{\lo} Y^\an \stackrel{\iota}{\lo} \Gamma \backslash D\]
the factorization of the period map $\varphi$, and $Y$ the {\em normalized image} of $\varphi$.
\end{definition}

Again, $\HL(X, \varphi)_\atyp = f^{-1}(\HL(Y, \iota)_\atyp)$, and the study of \Cref{ZP} for $X$ and $\varphi$ is reduced to the study of \Cref{ZP} for $Y$ and $\iota$.
\subsection{Results}

\cite{baldi-klingler-ullmo} in the pure case, generalized by \cite{BU} in the mixed case, proved the {\em geometric} part of \Cref{ZP}, see \Cref{reminder}. In this paper we use this geometric result and spreading arguments towards the full \Cref{ZP}. 

Our first result deals with the mixed Shimura case. Any mixed Shimura variety $\cS$ admits a canonical model $\cS_{\oQ}$ over $\oQ$. Any morphism of mixed Shimura variety coming from a morphism of mixed Shimura data is defined over $\oQ$. In particular, any quotient $\cS \twoheadrightarrow \cS'$ coming from a quotient mixed Shimura datum $(\bfG, \cD) \twoheadrightarrow (\bfG', \cD')$ is defined over $\oQ$; and any special subvariety of $\cS$ is defined over $\oQ$.

\begin{theorem}\label{main-shimura}
Let $X \stackrel{\iota}{\hookrightarrow} \cS$ be a Hodge-generic closed irreducible algebraic subvariety of a mixed Shimura variety $\cS$. If no non-trivial image $X'$ of $X$ in a quotient Shimura variety $\cS \twoheadrightarrow \cS'$ is defined over $\oQ$ (as a subvariety of $\cS'$), then \Cref{ZP-shimura} holds true for~$X$.
\end{theorem}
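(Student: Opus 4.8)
The plan is to combine the geometric Zilber--Pink result of Baldi--Klingler--Ullmo / Baldi--Ullmo (the ``geometric part'' of \Cref{ZP}, cited as \Cref{reminder}) with a spreading-out argument that exploits the hypothesis that $X$, and all its nontrivial images in quotient Shimura varieties, fail to be defined over $\oQ$. Here is the structure I would follow.

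\emph{Step 1: Spreading out the pair $(X,\iota)$.} Since $\cS$ has a canonical model $\cS_{\oQ}$ over $\oQ$, I would choose a finitely generated field $k \subset \C$ over which the closed subvariety $X \hookrightarrow \cS$ is defined, and spread $X$ out to a smooth (after shrinking) scheme $\cX \to B$ over a $\oQ$-variety $B$ with function field $k$, together with a fibrewise closed immersion $\cX \hookrightarrow \cS_{\oQ} \times B$. The complex fibres $X_b = \cX_b \hookrightarrow \cS$ for $b \in B(\C)$ are then a family of subvarieties of $\cS$, with $X = X_{b_0}$ for the generic-point-type complex point $b_0$ corresponding to $k \hookrightarrow \C$. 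The key point of the hypothesis is that the family $(X_b)_{b \in B}$ is \emph{not isotrivial} after projecting to any nontrivial quotient Shimura variety: if it were, the fibre over a $\oQ$-point of $B$ would exhibit an image of $X$ defined over $\oQ$, contradicting the assumption.

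\emph{Step 2: The atypical Hodge locus spreads.} The atypical Hodge locus $\HL(X,\iota)_{\atyp}$ is a countable union of special subvarieties, and each special subvariety of $\cS$ is defined over $\oQ$; moreover the condition that a $\oQ$-special subvariety $Z \subset \cS$ meets $X_b$ atypically is an algebraic condition on $b$. I would argue that if $\HL(X,\iota)_{\atyp}$ were Zariski-dense in $X$, then (by a Baire-category / countability argument over the countably many $\oQ$-special subvarieties, as in \cite{baldi-klingler-ullmo}) the atypical Hodge locus $\HL(X_b,\iota_b)_{\atyp}$ would be Zariski-dense in $X_b$ for every $b$ in a ``large'' (co-meagre, hence Zariski-dense) subset of $B(\C)$, in particular for $b$ a $\oQ$-point. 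This is where one uses that atypicality is preserved: the generic Mumford--Tate datum of $X_b$ is, after shrinking $B$, constant equal to that of $X$, so codimensions in $\Gamma \backslash \cD$ are the same for all $b$, and atypicality of an intersection with a fixed $\oQ$-special subvariety $Z$ really is a constructible condition on $b$.

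\emph{Step 3: Conclude by running the geometric/functional transcendence input against arithmeticity.} For $b$ a $\oQ$-point with $\HL(X_b,\iota_b)_{\atyp}$ Zariski-dense, the geometric part of Zilber--Pink (\Cref{reminder}) says the atypical locus comes from finitely many ``atypical'' intersections forced for \emph{geometric} reasons — that is, controlled by the weakly special / bi-algebraic structure — so Zariski-density of $\HL(X_b)_{\atyp}$ forces $X_b$ itself to be contained in a proper weakly special subvariety, or more precisely forces its projection to some quotient Shimura variety $\cS'$ to be atypical, hence to be a proper weakly special (in fact special, after the Mumford--Tate reduction) subvariety $X'_b \subsetneq \cS'$. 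But a weakly special subvariety that is defined over $\oQ$ and whose Mumford--Tate datum has been arranged to be the generic one is special, and then by monodromy/specialization $X'_{b_0} = X'$ itself is defined over $\oQ$ inside $\cS'$, contradicting the hypothesis. Therefore $\HL(X,\iota)_{\atyp}$ is not Zariski-dense, which is \Cref{ZP-shimura} for $X$.

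\emph{Main obstacle.} The delicate point is Step 2 together with the last line of Step 3: one must show that Zariski-density of the \emph{atypical} part of the Hodge locus is a constructible-in-$b$ (or at least suitably semicontinuous) property, so that it propagates from the transcendental base point $b_0$ to a $\oQ$-point, and one must carefully match up ``weakly special and defined over $\oQ$ with generic Mumford--Tate datum'' with ``special'', so that the specialization argument produces a genuine $\oQ$-structure on an image of the \emph{original} $X$ rather than merely on a fibre. Keeping track of which quotient Shimura variety the atypicality lives in — and ensuring it is nontrivial — is where the hypothesis of the theorem is used in its full strength, and is the part I expect to require the most care.
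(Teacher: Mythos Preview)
Your Step~3 contains a genuine gap that makes the argument circular. The geometric Zilber--Pink theorem (\Cref{finiteness}) does \emph{not} say that Zariski-density of $\HL(X_b,\iota_b)_{\atyp}$ forces $X_b$ or any image $X'_b$ to be (weakly) special. What it says is that the maximal monodromy atypical subvarieties of $X_b$ are organised into finitely many families $(f_i:Z_i\to Y_i,\pi_i)$. If one such family is dominant, passing to the quotient Shimura variety $\cS'$ by the corresponding $\bfN_i$ turns the atypical special fibres of $f_i$ into atypical special \emph{points} of $X'_b\subset\cS'$; you have therefore only reduced to the same problem one level down, namely that $\HL(X'_b,\iota'_b)_{\maxatyp,\,\zero}$ is Zariski-dense in $X'_b$. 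Concluding from this that $X'_b$ is itself special is precisely the full Zilber--Pink conjecture for $X'_b$, which you are not assuming. So Step~3, as written, presupposes what the theorem is meant to prove.

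The paper's argument avoids this by proceeding quite differently. First, via \Cref{reduction} it reduces to showing that $\HL(X,\varphi')_{\maxatyp,\,\zero}$ is not Zariski-dense for every quotient period map $\varphi'$. Then, rather than spreading out all the way to $\oQ$ and looking at a $\oQ$-fibre, it inducts on the transcendence degree $n$ of a field of definition: one spreads out over a \emph{curve} $\cB$ defined over a field $L$ of transcendence degree $n-1$, and applies \Cref{finiteness} to the \emph{total space} $\cX$, not to a fibre. The point is that each maximal atypical special point $x$ of $X$ sits in a special subvariety $Z_x\subset\cX$; since $\dim\cX=\dim X+1$, either $Z_x$ is a point (then $x$ is $L$-rational, impossible for densely many $x$) or $Z_x$ is atypical of positive period dimension in $\cX$. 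Geometric Zilber--Pink for $\cX$ then puts the $Z_x$ into finitely many families; a dominant family yields a quotient $\cS'$ in which the image $\cX'$ of $\cX$, defined over $L$, coincides with the image $X'$ of $X$ (because the $L$-points $q(Z_x)$ are already dense in $X'$). This produces a nontrivial image of $X$ defined over $L$, and one concludes by induction. Your Step~2 obstacle --- propagating density across fibres --- simply does not arise, because the finiteness theorem is never applied to a fibre.
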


Let us illustrate this result in the pure case. Let $\cS$ be a pure Shimura variety, with Shimura datum $(\bfG, \cD)$. The decomposition of the adjoint group $\bfG^\ad$ into a product of simple factors $\bfG_1 \times \ldots \times \bfG_k$ induces a decomposition of (a finite quotient of) $\cS$ into $\cS_1 \times \ldots \times \cS_k$.

\begin{corollary} \label{corol0}
Let $\cS$ be a pure Shimura variety.  
Let $X \stackrel{\varphi}{\hookrightarrow} \cS$ be a Hodge generic closed irreducible complex subvariety. If its non-constant projections $X_i \subset \cS_{i}$, $1 \leq i \leq k$, are not defined over~$\oQ$ (as subvarieties of the $\cS_i$), then \Cref{ZP-shimura} holds true for $X$. In particular if $\cS$ is indecomposable (i.e. $\bfG^\ad$ is simple) and $X$ is not defined over $\oQ$, then \Cref{ZP} holds true for $X$.
\end{corollary}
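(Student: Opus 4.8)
The plan is to deduce \Cref{corol0} from \Cref{main-shimura} applied to $X\stackrel{\varphi}{\hookrightarrow}\cS$ itself; writing $(\bfG,\cD)$ for the Shimura datum of $\cS$, one must check that, under the hypothesis of \Cref{corol0}, no non-trivial (i.e. positive-dimensional) image of $X$ in a quotient Shimura variety $\cS\twoheadrightarrow\cS'$ is defined over $\oQ$. The mechanism is that such a quotient only sees $X$ through its factor projections $X_i$.

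The first step is purely group-theoretic. A quotient of Shimura data $(\bfG,\cD)\twoheadrightarrow(\bfG',\cD')$ induces a surjection $\bfG^{\ad}\twoheadrightarrow\bfG'^{\ad}$ (as $\bfG\to\bfG'$ is onto, it carries $Z(\bfG)$ into $Z(\bfG')$). Since $\bfG^{\ad}=\bfG_1\times\cdots\times\bfG_k$ with the $\bfG_i$ $\Q$-simple, and every normal subgroup of an adjoint semisimple group is connected and hence a subproduct of its simple factors, one gets $\bfG'^{\ad}\cong\prod_{i\in I}\bfG_i$ for some $I\subseteq\{1,\dots,k\}$. Passing to Shimura varieties---all morphisms of pure Shimura data being defined over $\oQ$---this produces a commutative diagram of $\oQ$-morphisms identifying the composite $\cS\to\cS'\to\cS'^{\ad}$ with the projection $\cS\to\cS^{\ad}\to\prod_{i\in I}\cS_i$, up to the harmless finite quotients relating $\cS$, $\cS^{\ad}$ and $\prod_i\cS_i$.

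Now suppose $X'=\overline{\varphi'(X)}\subseteq\cS'$ is positive-dimensional and defined over $\oQ$ for some quotient $\cS\stackrel{\varphi'}{\twoheadrightarrow}\cS'$. As $X$ is Hodge-generic in $\cS$, the image $X'$ has generic Mumford--Tate group $\bfG'$ and so is Hodge-generic in $\cS'$. On connected components $\cD'^{+}$ is the Hermitian symmetric domain associated to $\bfG'^{\ad}$, so $\cD'^{+}\xrightarrow{\ \sim\ }\cD'^{\ad,+}$ and the morphism $\cS'\to\cS'^{\ad}$ has finite fibres; hence the image $Z$ of $X'$ in $\cS'^{\ad}=\prod_{i\in I}\cS_i$ is again positive-dimensional (in particular $I\neq\emptyset$), and it is defined over $\oQ$, being the image of the $\oQ$-variety $X'$ under an $\oQ$-morphism. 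A positive-dimensional subvariety of a product projects non-constantly to some factor $\cS_j$, $j\in I$, and its image there is exactly the projection $X_j\subseteq\cS_j$ of $X$, which is therefore a non-constant projection of $X$ defined over $\oQ$---contradicting the hypothesis. Thus \Cref{main-shimura} applies and \Cref{ZP-shimura} holds for $X$. The final assertion is the case $k=1$, using in addition that $X$ is defined over $\oQ$ if and only if its image $X_1$ in $\cS^{\ad}$ is: one implication is the $\oQ$-morphism argument above, and the other holds because $\oQ$ is algebraically closed, so that the $\C$-irreducible components of the $\oQ$-variety $\pi^{-1}(X_1)$---one of which is $X$, since $\pi\colon\cS\to\cS^{\ad}$ is finite---are themselves defined over $\oQ$.

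I expect no serious obstacle: the argument is bookkeeping layered on top of \Cref{main-shimura}. The two points that need a little care are the identification $\cD'^{+}\xrightarrow{\ \sim\ }\cD'^{\ad,+}$ (so that $Z$ does not collapse to a point) and the invariance of ``defined over $\oQ$'' under the finite quotients relating $\cS$ and $\prod_i\cS_i$ (which relies on $\oQ$ being algebraically closed); both are standard facts about pure Shimura data.
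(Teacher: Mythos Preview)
Your proposal is correct and proceeds along the expected lines. The paper does not provide a separate proof of \Cref{corol0}; it is stated as an immediate illustration of \Cref{main-shimura} in the pure case, after recalling the decomposition $\bfG^{\ad}=\bfG_1\times\cdots\times\bfG_k$ and the induced decomposition of (a finite quotient of) $\cS$. Your argument supplies precisely the details the paper leaves implicit: that any quotient Shimura datum of $(\bfG,\cD)$ has adjoint isomorphic to a subproduct $\prod_{i\in I}\bfG_i$, that the passage $\cS'\to\cS'^{\ad}$ is finite (since $\cD'^{+}\simeq\cD'^{\ad,+}$ in the pure case), and that a positive-dimensional $\oQ$-image must then project non-constantly and over $\oQ$ to some factor $\cS_j$. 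The treatment of the indecomposable case via the finite $\oQ$-morphism $\cS\to\cS^{\ad}$ and the fact that irreducible components over $\oQ$ remain $\oQ$-defined is likewise the standard verification. Nothing substantively different from what the paper has in mind.
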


\begin{remarque}
    The special case of \Cref{corol0} where $\cS= \cA_{g}$ the moduli space of principally polarized abelian varieties of dimension~$g$ was recently also proven by Barroero-Dill \cite[Theorem 1.1]{barroero-dill}.
\end{remarque}
 
Let us now turn to $\Z$VMHSs not necessarily of mixed Shimura type.
We will need the following notation.

\begin{definition}  \label{def1}
Let $X$ be an irreducible smooth quasi-projective algebraic variety over $\C$. 
\begin{enumerate}
\item A $\Z$VMHS $\bV$ on $X$ is said {\em geometric} if, possibly after replacing $X$ by a Zariski-dense open subset, there exists a flat surjective equisingular algebraic family $g:Z \lo X$ such that $\bV$ is a subquotient of the $\Z$VMHS $R^i g_* \Z$. A period map $\varphi: X^\an \rightarrow \Gamma \backslash D$ is said {\em geometric} if it is the period map of some geometric $\Z$VMHS on $X$. If $g: Z \lo X$ can be defined over a subfield $K \subset \C$, we say that $\bV$, resp. $\varphi$, can be {\em geometrically defined} over~$K$.
\item We say that a $\Z$VMHS $\bV$ on $X$ (or a period map $\varphi: X^\an \lo \Gamma \backslash D$) factorizes over a field $K \subset \C$ if the normalized image $Y$ of $\varphi$ defined in \Cref{def0} admits a model over $K$.
\item A {\em quotient} of a period map $\varphi: X^\an \lo \Gamma \backslash D$ with generic Hodge datum $(\bfG, D)$ is a period map $$\varphi': X^\an \stackrel{\varphi}{\lo} \Gamma \backslash \cD \twoheadrightarrow \Gamma' \backslash \cD'$$ for some  quotient Hodge datum $(\bfG, \cD) \twoheadrightarrow (\bfG', \cD')$. It is said to be {\em non-trivial} if its normalized image $Y'$ is not a point.
\item A geometric $\Z$VMHS $\bV$ on $X$ (resp. a period map $\varphi: X^\an \lo \Gamma \backslash D$) is said {\em absolute} if, for any algebraically closed field of $K \subset \C$ of $\bV$ over which $\bV$ (resp. $\varphi)$ can be geometrically defined, the special subvarieties of $X$ for $\bV$ (resp. $\varphi$) are defined over~$K$.
\end{enumerate}
\end{definition}

\begin{remarque}
If $\cS$ is a mixed Shimura variety and $Y \stackrel{\iota}{\hookrightarrow} \cS$ is a closed irreducible algebraic subvariety of $\cS$, then $\iota$ is absolute. It is expected that any $\bV$ of geometric origin is absolute, see \cite{voisin-absolute}, \cite{saito-schnell}, \cite{klingler-otwinowska-urbanik} for results in that direction.
\end{remarque}

Our second result in this note is the following.
\begin{theorem}\label{main-absolute}
Suppose that all $\Z$VMHSs of geometric origin are absolute.
Let $X$ be an irreducible smooth quasi-projective algebraic variety over $\C$ and let $\bV$ be a geometric $\Z$VMHS on $X$, with period map $\varphi$. If no non-trivial quotient of $\varphi$ factorizes over $\oQ$, then the Zilber-Pink \Cref{ZP} holds true for $X$ and $\bV$. 
\end{theorem}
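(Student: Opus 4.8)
The plan is to combine the geometric Zilber--Pink theorem recalled in \Cref{reminder} with a spreading-out argument over a $\oQ$-base, using absoluteness to propagate fields of definition.

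\emph{Reduction and spreading out.} First I would reduce to the normalized image: since $\HL(X,\varphi)_\atyp=f^{-1}(\HL(Y,\iota)_\atyp)$ and $f$ is dominant, it suffices to prove \Cref{ZP} for $Y$ and $\iota$. If $Y$ is a point there is nothing to prove, so assume $\dim Y>0$; viewing $\iota$ as the trivial quotient of itself, the hypothesis gives that $Y$ — and more generally the normalized image $Y'$ of any non-trivial quotient of $\iota$ — has no model over $\oQ$. As $\bV$ is geometric it is geometrically defined over a subfield $K\subset\C$ of finite transcendence degree over $\Q$; by the standing absoluteness assumption the special subvarieties of $X$, hence of $Y$, are then defined over $K$, and (this needs a short argument from the algebraicity of the Hodge locus together with Galois descent) $Y$ itself has a model over $K$, so $K$ has positive transcendence degree. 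I would then pick a smooth $\oQ$-variety $B$ with $\oQ(B)\subseteq K$ a field of geometric definition of $\bV$ and $\dim B>0$, and spread out the defining family to a flat equisingular family $\mathbf g:\mathbf Z\to\mathbf X\to B$ of $\oQ$-varieties whose geometric generic fiber recovers $g:Z\to X$. This produces a geometric $\Z$VMHS $\widetilde\bV$ on $\mathbf X$, \emph{defined over $\oQ$}, restricting to $\bV$ on $X$; by the standing assumption $\widetilde\bV$ is absolute over $\oQ$, so the special subvarieties of $\mathbf X$, and those of the normalized image $\mathbf Y$ of its period map, are all defined over $\oQ$.

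\emph{The special points disappear.} A special point of $Y$ is, viewed inside $\mathbf Y$, a special point of $\mathbf Y$ (speciality of a point depends only on the Mumford--Tate group of its fiber), hence defined over $\oQ$, hence lies over a $\oQ$-point of $B$; since $\dim B>0$, no such point lies in the geometric generic fiber $Y$ of $\mathbf Y\to B$. So $Y$ has no special points. As $\iota$ is finite onto its image, the atypical special subvarieties of $Y$ with $0$-dimensional image are exactly the special points of $Y$; hence there are none, and this part of the atypical locus is empty.

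\emph{The positive-dimensional part.} For atypical special subvarieties $Z$ of $Y$ with $\dim\iota(Z)>0$, \Cref{reminder} says that the non-degenerate ones form a finite union of special subvarieties, so it remains to control the degenerate ones. Each such $Z$ lies in a proper weakly special subvariety of $Y$, i.e. in a component $\mathcal T$ of a fiber $\iota'^{-1}(p)$ of a quotient period map $\iota':Y\to Y_1'$ attached to a quotient Hodge datum $(\bfG,\cD)\twoheadrightarrow(\bfG_1,\cD_1)$ with $Y_1'$ non-trivial; refining the quotient, one may assume $p$ is either a special point of $Y_1'$ or Hodge-generic in $Y_1'$. The first case is empty: $Y_1'$ is the normalized image of a non-trivial quotient of $\iota$, so by the hypothesis it has no model over $\oQ$, and the spreading argument of the previous paragraph applied to $Y_1'$ shows it has no special point. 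In the second case a codimension count shows that $Z$ is already atypical inside the lower-dimensional fiber $\mathcal T$ for its induced variation; so, by induction on $\dim Y$ (the case $\dim Y=0$ being trivial), it is enough to know \Cref{ZP} for a general such fiber $\mathcal T$ equipped with its induced $\Z$VMHS, which is again geometric and absolute.

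\emph{Main obstacle.} The crux is making the induction legitimate: one must show that for general $p\in Y_1'$ the induced variation on the fiber $\mathcal T$ still has \emph{no non-trivial quotient of its period map factorizing over $\oQ$}. This requires the precise structure of quotient Hodge data — a quotient of the period map of $\mathcal T$ should extend to a quotient of $\iota'$, compatibly as $p$ varies — together with a relative spreading-out of $Y\to Y_1'$ over $B$: if the normalized image of such a quotient of the period map of $\mathcal T$ admitted a model over $\oQ$ for a Zariski-dense set of $p$, this would produce a model over $\oQ$ for the normalized image of a non-trivial quotient of $\iota$, contradicting the hypothesis. Everything else — identifying the (non)degenerate atypical locus of \Cref{reminder} with the description by weakly special subvarieties and quotient Hodge data, and checking that the normalization, the factorization of period maps and the various spreadings are mutually compatible — is routine but rests throughout on the absoluteness hypothesis.
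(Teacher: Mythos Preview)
Your argument has a genuine gap in the step ``\emph{The special points disappear}.'' You assume there is a map $\mathbf Y \to B$ with $Y$ as geometric generic fiber, but no such map exists in general: the fibers of the period map $\mathbf X\to\mathbf Y$ need not lie in fibers of $\mathbf X\to B$, and two distinct fibers of $\mathbf X\to B$ may have overlapping images in $\mathbf Y$. In the Shimura case, for instance, $\mathbf Y$ is simply a subvariety of the Shimura variety $\cS$, with no projection to $B$; and a Hodge-generic $Y\subset\cS$ not defined over $\oQ$ typically contains many CM points, so the conclusion ``$Y$ has no special points'' is already false there. Even setting this aside, the claim that a special point $y$ of $Y$ is a $0$-dimensional special subvariety of $\mathbf Y$ fails: the component of $\tilde\iota^{-1}(S_y)$ through $y$ (where $S_y\subset\Gamma\backslash\cD$ is the Mumford--Tate subdomain for $y$) may be positive-dimensional in $\mathbf Y$, and absoluteness then only tells you that this larger subvariety, not the point $\{y\}$, is defined over $\oQ$.

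The paper's proof avoids this by spreading out one transcendence degree at a time, over a \emph{curve} $\cB$ defined over a field $L$ of transcendence degree $n-1$, and running an induction on $n$ (via \Cref{reduction}, which reduces \Cref{ZP} to the non-density of $\HL(Y,\iota)_{\maxatyp,\zero}$ for every quotient of $\varphi$). For each maximal atypical special point $y$ of $Y$ the special subvariety $T_y$ of the spread-out $\cY$ through $y$ then has dimension $0$ or $1$. If a Zariski-dense set of $T_y$'s have dimension $0$, they are defined over $L$ and their closure $Y$ is defined over $L$, contradicting minimality of $n$. If a Zariski-dense set are $1$-dimensional, they are atypical of positive period dimension in $\cY$, so \Cref{finiteness} packages them into a single dominant family and produces a non-trivial quotient period map whose normalized image $Y'=\cY'$ is defined over $L$; this contradicts the hypothesis if $n=1$, and the inductive assumption if $n>1$. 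Your induction on $\dim Y$, by contrast, feeds the hypothesis to the fibers $\mathcal T$ of a quotient map of $Y$ itself; as you acknowledge in ``\emph{Main obstacle}'', verifying the no-$\oQ$-quotient condition for those fibers is precisely the crux, and it is not established.
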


\begin{theorem} \label{main-geometric}
    \Cref{ZP} holds true for $\Z$VMHS of geometric origin if and only if it holds true for $\Z$VMHS of geometric origin defined over $\overline{\Q}$.
\end{theorem}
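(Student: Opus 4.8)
The implication ``$\Rightarrow$'' is trivial, since a $\Z$VMHS of geometric origin defined over $\oQ$ is in particular a $\Z$VMHS of geometric origin. For ``$\Leftarrow$'', assume \Cref{ZP} for all $\Z$VMHS of geometric origin defined over $\oQ$; let $\bV$ be a geometric $\Z$VMHS on a smooth irreducible quasi-projective complex variety $X$, with period map $\varphi\colon X^{\an}\to\Gamma\backslash\cD$, and let us prove that $\HL(X,\bV^\otimes)_{\atyp}$ is not Zariski-dense in $X$.

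The plan is to realise $(X,\bV)$ as a very general member of a family defined over $\oQ$, and to deduce \Cref{ZP} for $(X,\bV)$ from \Cref{ZP} for the $\oQ$-members of that family. Concretely: since $\bV$ is geometric, after shrinking $X$ it is a subquotient of $R^i g_*\Z$ for a flat surjective equisingular $g\colon Z\to X$, and $(X,Z,g)$ is defined over a subfield $K\subset\C$ that is finitely generated over $\oQ$. Choosing a smooth affine irreducible $\oQ$-variety $S$ with $\oQ(S)=K$ and spreading out (after shrinking $S$), one obtains $\oQ$-morphisms $\mathcal Z\xrightarrow{\ \mathcal G\ }\mathcal X\xrightarrow{\ \pi\ }S$ with $\pi$ smooth surjective, a subquotient $\bW$ of $R^i\mathcal G_*\Z$ (a geometric $\Z$VMHS on $\mathcal X_\C$ that can be geometrically defined over $\oQ$), and a $\C$-point $s_0$ of $S$ induced by $K\hookrightarrow\C$, such that $X=(\mathcal X_\C)_{s_0}$ and $\bV=\bW_{|X}$; the point $s_0$ is \emph{generic over $\oQ$}, i.e.\ it lies on no proper closed $\oQ$-subvariety of $S$. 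For every $s\in S(\oQ)$ the fibre $\big((\mathcal X_\C)_s,\bW_s\big)$ is then a geometric $\Z$VMHS which can be geometrically defined over $\oQ$, so \Cref{ZP} holds for it by hypothesis.

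By the geometric part of the Zilber--Pink conjecture (\Cref{reminder}) applied to $(X,\bV)$, the union of the atypical special subvarieties of $X$ of positive period dimension is already a finite union of maximal atypical special subvarieties; so it is enough to bound the union $\Sigma_X$ of the atypical special subvarieties of $X$ for $\bV$ \emph{of period dimension zero} --- that is, the components of the fibres of $\varphi$ over the special points of $\Gamma\backslash\cD$ lying on $\varphi(X)$. By \cite{cattani-deligne-kaplan},\cite{BBKT} these are algebraic, and in the family they assemble into a countable union $\boldsymbol\Sigma\subset\mathcal X_\C$ of closed subvarieties, supported over well-behaved strata of $S_\C$, with $\boldsymbol\Sigma\cap(\mathcal X_\C)_s$ equal to the period-dimension-zero part of $\HL\big((\mathcal X_\C)_s,\bW_s^\otimes\big)_{\atyp}$ for each $s$, and with $\boldsymbol\Sigma\cap X=\Sigma_X$ up to a subset contained in a proper closed subvariety of $X$ (accounting for the possible drop of the generic Mumford--Tate group along the fibre $X$). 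Since \Cref{ZP} holds over $\oQ$, $\boldsymbol\Sigma\cap(\mathcal X_\C)_s$ is not Zariski-dense in $(\mathcal X_\C)_s$ for every $s\in S(\oQ)$, and --- by the equivalent reformulation of \Cref{ZP} --- it is a finite union of maximal special subvarieties there. One then argues that this ``finite, fibrewise'' description propagates, through a stratification of $S$ compatible with $\boldsymbol\Sigma$, with the monodromy and with the generic Mumford--Tate data (all defined over $\oQ$), from the Zariski-dense set $S(\oQ)$ to a Zariski-dense open subset of $S$, hence to $s_0$. Therefore $\Sigma_X$ is not Zariski-dense in $X$, and, with the reduction above, neither is $\HL(X,\bV^\otimes)_{\atyp}$.

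The crux is this last propagation step: comparing atypicality, specialness and the period-dimension-zero locus across the (geometric) generic fibre and the $\oQ$-fibres of $\pi$. In the situation of \Cref{main-absolute} such a comparison is made possible by the assumption that geometric $\Z$VMHS are absolute, which allows one to descend special subvarieties to $\oQ(S)^{\mathrm{alg}}$; here that assumption is not available, and one must instead control the relevant fields of definition directly --- which is precisely where the hypothesis ``\Cref{ZP} over $\oQ$'' and the Zariski-density of $S(\oQ)$ in $S$ enter in place of absoluteness. Verifying, in particular, that period-dimension-zero atypical special subvarieties specialise to period-dimension-zero atypical special subvarieties, and that the loci on $S$ governing this are themselves defined over $\oQ$, is the technical heart of the proof.
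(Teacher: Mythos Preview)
Your proposal has a genuine gap at exactly the place you flag as ``the technical heart'': you never actually carry out the propagation from the $\oQ$-fibres to the generic fibre. Knowing that $\boldsymbol\Sigma\cap(\mathcal X_\C)_s$ is a finite union of special subvarieties for every $s\in S(\oQ)$ gives you, a priori, a different finite collection for each such $s$; there is no mechanism in your outline that forces these collections to fit together into finitely many algebraic families over $S$, nor that prevents the number of maximal atypical specials from growing without bound as $s$ varies. The loci on $S$ you would need to control (where the fibrewise Mumford--Tate group drops, where a given component of $\boldsymbol\Sigma$ becomes fibrewise-atypical, etc.) are exactly the kind of loci whose $\oQ$-definability would follow from absoluteness --- which you correctly note you do not have.

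The paper's argument is structurally different and avoids this difficulty entirely: rather than applying the $\oQ$-hypothesis to the \emph{fibres} of the spread-out, one applies it (via induction on transcendence degree, one step at a time as in the proof of \Cref{main-absolute}) to the \emph{total space} $\cX$. After spreading $g:Z\to X$ over a curve $\cB$ defined over a field $L$ of transcendence degree $n-1$, the family $G:\cZ\to\cX$ is geometric over $L$, so by the inductive hypothesis \Cref{ZP} holds for $(\cX,\Phi)$. The atypical special points of $X$ are then promoted, exactly as in the $Z_x$/$T_y$ construction of the proof of \Cref{main-absolute}, to atypical special subvarieties of $\cX$ of positive period dimension; the Zariski-density of the former would force the Zariski-density of the latter in $\cX$, contradicting \Cref{ZP} for $\cX$. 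In this reading, every appeal to absoluteness in the proof of \Cref{main-absolute} is replaced by a direct appeal to the inductive hypothesis (ZP at transcendence degree $<n$, applied to the total space), and the base case $n=0$ is precisely the assumption of \Cref{main-geometric}. The key point you are missing is that ZP on the total space controls the fibre $X$ directly through this promotion of atypical points, whereas ZP on the other fibres tells you nothing about $X$ without the propagation argument you were unable to supply.
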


\section{Geometric finiteness} \label{reminder}
In this section, we recall the finiteness theorem for atypical Hodge loci {\em of positive period dimension} recently established in the pure case \cite{baldi-klingler-ullmo}
and in the mixed case \cite{BU}.
\medskip

Let $X$, $\bV$, $(\bfG,\cD)$ be as in \Cref{introduction}, with period map
$\varphi:X^\an \rightarrow \Gamma\backslash \cD$.
The notion of special subvariety can be generalized, using the algebraic monodromy group instead of the generic Mumford-Tate group. Let $(\bfH,\cD_{\bfH})$ be the monodromy datum of the variation $\bV$. Thus $\bfH$ is the algebraic monodromy group of the local system underlying $\bV$, and $\cD_{\bfH} \subset \cD$ is the corresponding monodromy domain. According to a well-known result of Deligne and Andr\'e \cite[Theorem 1]{andre-mixed}, the semi-simple group $\bfH$ is normal in the derived group $\bfG^{\rm{der}}$ of $\bfG$. Up to a finite cover of $X$, the period map $\varphi$ factors through the monodromy special subvariety $\cGamma_{\bfH} \backslash \cD_{\bfH}$ of $\cGamma\backslash \cD_{\bfG}$: 
\[\varphi:X^\an\rightarrow \cGamma_{\bfH} \backslash \cD_{\bfH}\subset \cGamma\backslash \cD_{\bfG} .\] We refer to \cite{klingler}, \cite{klingler-otwinowska}, \cite{BBKT}\cite{gao-klingler} for more details.

The following result is proven in \cite{klingler-otwinowska} in the pure case, the proof generalizes easily to the mixed case:
\begin{proposition}
Let $Z\subset X$ be an irreducible algebraic subvariety. Let $(\bfH_Z,\cD_{\bfH_Z})$ be the monodromy datum of $\bV_{|Z}$. The following conditions are equivalent.
    \begin{enumerate}
        \item $Z$ is maximal among the irreducible subvarieties of $X$ with monodromy datum $(\bfH_Z,\cD_{\bfH_Z})$. 
        \item $Z^\an$ is an irreducible component of the preimage under $\varphi$ of a monodromy special variety $\cGamma_{\bfH_Z}\backslash \cD_{\bfH_Z}$. 
    \end{enumerate}

If $Z$ satisfies one of the conditions above, then we say that $Z$ is weakly (or monodromy) special.
\end{proposition}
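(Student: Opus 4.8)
The plan is, given $Z$, to single out a distinguished irreducible subvariety $Z'\supseteq Z$ with the same monodromy datum as $\bV_{|Z}$ — namely the irreducible component through $Z$ of the preimage under $\varphi$ of the monodromy special variety $\cGamma_{\bfH_Z}\bs\cD_{\bfH_Z}$ — and then to read off both implications from the fact that $Z'$ is simultaneously the largest subvariety with monodromy datum $(\bfH_Z,\cD_{\bfH_Z})$ through a given Hodge-generic point of $Z$ and an irreducible component of $\varphi^{-1}(\cGamma_{\bfH_Z}\bs\cD_{\bfH_Z})$. The first step is the inclusion $\varphi(Z^{\an})\subseteq\cGamma_{\bfH_Z}\bs\cD_{\bfH_Z}$. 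Recall that, by construction, for a Hodge-generic point $t\in Z$ with a chosen lift $\tilde t\in\cD$ of $\varphi(t)$, the monodromy domain $\cD_{\bfH_Z}$ is the orbit of $\tilde t$ under $\bfH_Z(\R)^+$ (together with the real points of the unipotent radical of the generic Mumford--Tate group, in the mixed case), $\cGamma_{\bfH_Z}$ the induced arithmetic subgroup, and $\cGamma_{\bfH_Z}\bs\cD_{\bfH_Z}$ a closed analytic subvariety of $\cGamma\bs\cD$. Since the identity component of the Zariski closure of the monodromy of $\bV_{|Z}$ is $\bfH_Z$, the theorem of the fixed part for admissible $\Z$VMHS together with Andr\'e's normality theorem \cite{andre-mixed} forces the lift of the period map of $\bV_{|Z}$ to stay inside the $\bfH_Z$-orbit $\cD_{\bfH_Z}$; descending, $\varphi(Z^{\an})\subseteq\cGamma_{\bfH_Z}\bs\cD_{\bfH_Z}$. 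Hence $Z^{\an}$ lies in a unique irreducible component $Z'$ of $\varphi^{-1}(\cGamma_{\bfH_Z}\bs\cD_{\bfH_Z})$, which is an algebraic subvariety by the mixed Cattani--Deligne--Kaplan theorem \cite{brosnan-pearlstein}, \cite{BBKT}.

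The second step is to check that $Z'$ has monodromy datum $(\bfH_Z,\cD_{\bfH_Z})$. On the one hand $Z\subseteq Z'$ gives $\bfH_Z\subseteq\bfH_{Z'}$, since the algebraic monodromy group is non-decreasing under inclusion of subvarieties (a standard fact, obtained after passing to smooth models). On the other hand $\varphi(Z'^{\an})\subseteq\cGamma_{\bfH_Z}\bs\cD_{\bfH_Z}$, so after a finite \'etale cover the period map of $\bV_{|Z'}$ lifts into $\cD_{\bfH_Z}$; the monodromy of $\bV_{|Z'}$ is then carried by the deck group of $\cD_{\bfH_Z}\to\cGamma_{\bfH_Z}\bs\cD_{\bfH_Z}$ and hence lies in $\bfH_Z(\Q)$ up to finite index, so $\bfH_{Z'}\subseteq\bfH_Z$. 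Thus $\bfH_{Z'}=\bfH_Z$, and since $\cD_{\bfH_{Z'}}$ and $\cD_{\bfH_Z}$ are orbits of the same group through a common Hodge-generic point of $\varphi(Z^{\an})\subseteq\varphi(Z'^{\an})$, the two monodromy data agree.

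Granting these two steps, both implications become formal. For (1)$\Rightarrow$(2): $Z'$ is an irreducible subvariety containing $Z$ with the same monodromy datum, so maximality of $Z$ forces $Z=Z'$, i.e. $Z^{\an}$ is an irreducible component of $\varphi^{-1}(\cGamma_{\bfH_Z}\bs\cD_{\bfH_Z})$. For (2)$\Rightarrow$(1): if $Z''\supseteq Z$ is irreducible with the same monodromy datum, then applying the first step to $Z''$ gives $\varphi(Z''^{\an})\subseteq\cGamma_{\bfH_{Z''}}\bs\cD_{\bfH_{Z''}}=\cGamma_{\bfH_Z}\bs\cD_{\bfH_Z}$, so $Z''^{\an}$ is an irreducible closed analytic subset of $\varphi^{-1}(\cGamma_{\bfH_Z}\bs\cD_{\bfH_Z})$ containing the irreducible component $Z^{\an}$; maximality of that component forces $Z''=Z$, so $Z$ is maximal with its monodromy datum.

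I expect the main obstacle to be the first step in its mixed incarnation: one must set up the monodromy datum $(\bfH_Z,\cD_{\bfH_Z})$ correctly, accounting for the unipotent radical, and invoke the theorem of the fixed part in the admissible mixed setting, so as to guarantee that the period map of $\bV_{|Z}$ cannot leave the $\bfH_Z$-orbit; this is precisely where the pure-case argument of \cite{klingler-otwinowska} has to be adapted, and while it is expected to go through without difficulty, it is not purely formal. The auxiliary facts that $\cGamma_{\bfH_Z}\bs\cD_{\bfH_Z}$ is closed in $\cGamma\bs\cD$ and that the components of its $\varphi$-preimage are algebraic — needed only to make sense of statement (2) — are the remaining external inputs; everything else is point-set topology of analytic spaces.
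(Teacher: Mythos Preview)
The paper does not give its own proof of this proposition: it simply records that the result is proven in \cite{klingler-otwinowska} in the pure case and asserts that the argument extends without difficulty to the mixed case. Your proposal is a faithful reconstruction of that very argument --- build the candidate $Z'$ as the component of $\varphi^{-1}(\cGamma_{\bfH_Z}\backslash\cD_{\bfH_Z})$ through $Z$, compare monodromy data in both directions, and deduce both implications formally --- so there is nothing to contrast; you have supplied exactly the proof the paper defers to the reference, and you correctly flag the one non-formal point (the mixed theorem of the fixed part and the correct setup of the monodromy orbit with its unipotent part) as the place where the pure argument needs adaptation.
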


One easily checks that any special subvariety of $X$ is monodromy special, see \cite[7.1]{klingler}.

\begin{definition} \label{mono-atyp}
Let $Z\subset X$ be a monodromy special subvariety for $\bV$, with monodromy data $(\bfH_Z,\cD_{\bfH_Z})$. 
We say that $Z$ is monodromy atypical if:
\[\codim_{\varphi(X^\an)}(\varphi(Z^\an))<\codim_{\cD_{\bfH}}(\cD_{\bfH_{Z}})~.\]
\end{definition}

\begin{lemma} \label{atyp implies monod atyp}
    Let $Z \subset X$ be an atypical (resp. a maximal atypical) special subvariety for $\bV$. Then $Z$ is a monodromy atypical (resp. a maximal monodromy atypical) subvariety of $S$ for $\bV$. 
\end{lemma}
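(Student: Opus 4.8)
The plan is to reduce the (non‑maximal) statement to a purely group‑theoretic inequality between two codimensions and then to prove that inequality using the Deligne–André theorem on the algebraic monodromy group together with the product decomposition of Mumford–Tate domains; the maximal statement is then obtained by a noetherian maximality argument after reducing to the case where the period map is a closed immersion.

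\emph{Reduction.} Write $d(Z):=\codim_{\varphi(X^\an)}\varphi(Z^\an)$. By definition a special $Z$ is atypical iff $d(Z)<\codim_{\Gamma\backslash\cD}(\Gamma_Z\backslash\cD_Z)=\dim\cD-\dim\cD_Z$, while $Z$ is monodromy atypical iff $d(Z)<\codim_{\cD_\bfH}(\cD_{\bfH_Z})=\dim\cD_\bfH-\dim\cD_{\bfH_Z}$. Hence the first assertion follows from the inequality
\[\dim\cD-\dim\cD_Z\ \le\ \dim\cD_\bfH-\dim\cD_{\bfH_Z}\,,\qquad\text{equivalently}\qquad \dim\cD-\dim\cD_\bfH\ \le\ \dim\cD_Z-\dim\cD_{\bfH_Z}\,,\tag{$\star$}\]
valid for \emph{every} irreducible $Z\subset X$, where $(\bfG_Z,\cD_Z)$ and $(\bfH_Z,\cD_{\bfH_Z})$ denote the generic Hodge datum and the monodromy datum of $\bV_{|Z}$.

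\emph{Proof of $(\star)$.} I argue in the pure case; the mixed case is the same provided one replaces each group by its mixed Mumford–Tate group and keeps track of the weight filtration and the unipotent directions as in \cite{BU}. By Deligne–André, $\bfH$ is semisimple and normal in $\bfG^{\rm der}$, so $\bfG^{\ad}$ splits as an almost‑direct product $\bfG^{\ad}=\bfH^{\ad}\times\mathbf{L}$ with $\mathbf L$ the product of the remaining simple factors; correspondingly $\dim\cD=\dim\cD_{\bfG^{\ad}}=\dim\cD_{\bfH}+\dim\cD_{\mathbf L}$, since Mumford–Tate domains of (almost‑)products are products and since a central isogeny does not change the domain. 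Let $q\colon\bfG\twoheadrightarrow\mathbf L$ be the projection. As $\bfH\subseteq\ker q$ and the monodromy of $\bV$ lies in $\bfH$, the quotient period map $X^\an\to\Gamma_{\mathbf L}\backslash\cD_{\mathbf L}$ attached to the quotient Hodge datum $(\bfG,\cD)\twoheadrightarrow(\mathbf L,\cD_{\mathbf L})$ has trivial algebraic monodromy; after replacing $X$ by a finite étale cover (which changes neither side of $(\star)$) it is therefore constant, and its constant fibre is a Hodge structure of Mumford–Tate group $q(\bfG)=\mathbf L$. Restricting to $Z$ this quotient map stays constant with the same fibre, so $q(\bfG_Z)=\mathbf L$, i.e. the image $\bar\bfG_Z$ of $\bfG_Z$ in $\bfG^{\ad}$ surjects onto $\mathbf L$. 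Its kernel $\bfN:=\bar\bfG_Z\cap(\bfH^{\ad}\times\{1\})$ is normal in the semisimple group $\bar\bfG_Z$, so $\bar\bfG_Z$ is an almost‑direct product $\bfN\cdot\mathbf L_Z$ with $\mathbf L_Z$ isogenous to $\mathbf L$, whence $\dim\cD_Z=\dim\cD_{\bar\bfG_Z}=\dim\cD_{\bfN}+\dim\cD_{\mathbf L}$. On the other hand $\bfH_Z\subseteq\bfH$ (restricting shrinks monodromy) and $\bfH_Z\subseteq\bfG_Z$, so the image of $\bfH_Z$ in $\bfG^{\ad}$ lies in $\bfN$, giving $\dim\cD_{\bfH_Z}\le\dim\cD_{\bfN}$. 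Combining, $\dim\cD_Z-\dim\cD_{\bfH_Z}\ge\dim\cD_{\mathbf L}=\dim\cD-\dim\cD_\bfH$, which is $(\star)$.

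\emph{Maximal case.} Since $\HL(X,\varphi)_\atyp=f^{-1}(\HL(Y,\iota)_\atyp)$ and similarly for the monodromy atypical locus, one may replace $X$ by its normalized image, so that $\varphi$ is a closed immersion and $d(Z)=\codim_X Z$ for all $Z$. Assume $Z$ is maximal among atypical special subvarieties but $Z\subsetneq Z'$ with $Z'$ monodromy atypical, and take $Z'$ maximal with these properties (possible by noetherianity). Let $\langle Z'\rangle$ be its special closure, so $Z\subsetneq\langle Z'\rangle$, the generic Mumford–Tate group is unchanged, $\cD_{\langle Z'\rangle}=\cD_{\bfG_{Z'}}$, and by maximality of $Z$ the special subvariety $\langle Z'\rangle$ is not atypical. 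Working inside $\langle Z'\rangle$: $Z'$ is a component of $\langle Z'\rangle\cap\cD_{\bfH_{Z'}}$, so one controls $\codim_{\langle Z'\rangle}Z'$ by the codimension of the monodromy subdomain $\cD_{\bfH_{Z'}}$ in the monodromy domain of $\langle Z'\rangle$, and the key point is to show that the monodromy atypicality of $Z'$ in $X$ implies its monodromy atypicality \emph{inside} $\langle Z'\rangle$; applying $(\star)$ to $Z'$ regarded as a subvariety of $\langle Z'\rangle$ then forces $\langle Z'\rangle$ to be atypical in $X$, contradicting maximality of $Z$.

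\emph{Main obstacle.} The substance of the argument is $(\star)$, and within it the identification of the $\mathbf L$‑quotient as a constant period map with Hodge‑generic fibre; in the mixed setting one must additionally check that passage to the adjoint quotient loses no dimension coming from the unipotent radical, which is handled on the associated graded as in \cite{BU}. For the maximal statement the delicate point is the compatibility of (monodromy) atypicality with restriction to the generated special subvariety, and getting the above inequality chain to be strict at the right place; I expect this localization step, rather than $(\star)$ itself, to be where the real work lies.
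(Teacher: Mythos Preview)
Your reduction to the inequality $(\star)$ and your proof of $(\star)$ via the Deligne--Andr\'e decomposition are essentially the argument of \cite[Lemma~5.6]{baldi-klingler-ullmo}, which is exactly what the paper cites. One minor imprecision: you assert that the image $\bar\bfG_Z$ of $\bfG_Z$ in $\bfG^{\ad}$ is semisimple, but it is only reductive in general. This does not break the argument---since $\mathbf{L}$ is semisimple adjoint, the surjection $\bar\bfG_Z \twoheadrightarrow \mathbf{L}$ kills the connected centre (its image is central in $\mathbf{L}$, hence trivial), so one may pass to $\bar\bfG_Z^{\rm der}$ where the almost-direct product splitting $\bfN'\cdot\mathbf{L}_Z$ is available; the domain dimensions depend only on this semisimple part. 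Alternatively, your fibration picture (the $\bar\bfG_Z$-orbit maps onto $\cD_{\mathbf L}$ with fibres the $\bfN$-orbits) gives $\dim\cD_Z=\dim\cD_{\bfN}+\dim\cD_{\mathbf L}$ directly, without invoking semisimplicity.

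Your treatment of the maximal case, however, is a genuine gap, as you yourself flag. The outline---pass to the special closure $S=\langle Z'\rangle$, note that $S$ is typical by maximality of $Z$, and seek a contradiction---is the right shape, but the step you call ``localization'' is not carried out, and the route you suggest (``applying $(\star)$ to $Z'\subset\langle Z'\rangle$ forces $\langle Z'\rangle$ to be atypical in $X$'') does not work as stated: chasing the inequalities, one needs $\dim\cD_{\bfH}-\dim\cD_{\bfH_{Z'}}\le \dim\cD-\dim\cD_{\bfG_{Z'}}$, which is the \emph{reverse} of $(\star)$. The actual argument (this is \cite[Rem.~4.1]{baldi-klingler-ullmo}) requires a finer comparison, using simultaneously that typicality of $S$ gives the \emph{equality} $\codim_{\varphi(X)}\varphi(S)=\dim\cD-\dim\cD_{\bfG_S}$, that $\bfG_S=\bfG_{Z'}$ so the Andr\'e decomposition of $\bfG_{Z'}^{\ad}$ organizes both $\bfH_{Z'}$ and $\bfH_S$ as sub-products of the same simple factors, and the maximality of $Z'$ itself among monodromy atypical weakly specials. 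You should consult that reference to close the gap; the missing piece is not a variant of $(\star)$ but a separate combinatorial comparison of the two Andr\'e decompositions.
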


\begin{proof}
    When $\bV$ is pure this is \cite[Lemma 5.6]{baldi-klingler-ullmo}, and \cite[Rem. 4.1]{baldi-klingler-ullmo} for the maximality. The proof there extends verbatim to the mixed case.
\end{proof}

The following theorem solves the geometric part of Zilber--Pink conjecture. It was established in the pure case by Baldi-Klingler-Ullmo \cite[Theorem 6.1]{baldi-klingler-ullmo} and in the mixed case by Baldi-Urbanik \cite[Theorem 7.1]{BU}. Recall that by a {\em family of subvarieties of $X$} we mean a pair $(f: Z \lo Y, \pi:Z \lo X)$ where $f$ is a flat surjective morphism of irreducible varieties, and $\pi:Z \rightarrow X$ is an algebraic morphism whose restriction to each fiber of $f$ is a closed embedding. 

\begin{theorem}\label{finiteness}
 There are only finitely many families $(f_i: Z_i \lo Y_i, \, \pi_i: Z_i \lo X)$, $i \in \Sigma$, of maximal monodromy atypical subvarieties of $X$ for $\bV$. 

More precisely: there exists a finite set $\Sigma$ of triples $(\bfG_i,\cD_{i},\bfN_i)_{i\in \Sigma}$, where $(\bfG_i,\cD_{i})$ is a sub-Hodge datum of the generic Hodge datum $(\bfG,\cD_{\bfG})$, $\bfN_i$ is a normal subgroup of $\bfG_i$ whose reductive part is semisimple, and the following property holds: for each maximal monodromy atypical subvariety $Y\subset X$, there exist $i\in \Sigma$ and $y\in \cD_i$ for which $\cD_{\bfH_{Y}}=\bfN_i(\R)^+\bfN_i(\C)^u\cdot y$ \textup{(}up to the action of $\cGamma$\textup{)} and $Y$ is isomorphic to a fiber of $f_i$.
\end{theorem}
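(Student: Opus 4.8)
\textbf{Proof proposal for \Cref{finiteness}.}

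The plan is to deduce the finiteness statement from the intersection-theoretic description of atypicality, combined with the algebraicity of monodromy special subvarieties and a Noetherianity argument in a suitable parameter space of sub-Hodge data. First I would recall that, since $\bfH$ is semisimple and normal in $\bfG^{\mathrm{der}}$ (Deligne--Andr\'e), the set of candidate monodromy data $(\bfH_Z, \cD_{\bfH_Z})$ for subvarieties $Z \subset X$ is controlled by pairs $(\bfN, \text{orbit})$ where $\bfN$ is a normal subgroup of a sub-Hodge datum group $\bfG_i \subset \bfG$ whose reductive part is semisimple. The key point is that there are only countably many $\bfG(\Q)$-conjugacy classes of such semisimple $\bfN$, because semisimple subgroups of a fixed reductive $\Q$-group fall into finitely many conjugacy classes up to isogeny, and the unipotent parts are similarly constrained. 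So the problem is to show that only \emph{finitely many} of these actually occur as monodromy data of \emph{maximal monodromy atypical} subvarieties.

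The heart of the argument is the following. For a monodromy special $Z$ with datum $(\bfH_Z, \cD_{\bfH_Z})$, monodromy atypicality means
\[
\codim_{\varphi(X^\an)}(\varphi(Z^\an)) < \codim_{\cD_{\bfH}}(\cD_{\bfH_Z}).
\]
The plan is to invoke the Bakker--Brunebarbe--Tsimerman structure theorem (already cited in the excerpt) to replace $\varphi(X^\an)$ by the quasi-projective image $W$, and then to run the Baldi--Klingler--Ullmo / Baldi--Urbanik machinery: one considers, for each candidate datum $(\bfG_i, \cD_i, \bfN_i)$ in the countable list, the ``expected'' locus in $W$ cut out by the corresponding monodromy special subvarieties, organized into an algebraic family over a base $Y_i$ parametrizing the relevant $\bfG_i(\Q)^+$-translates. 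Atypicality forces an excess-dimension inequality which, via an Ax--Schanuel-type input for the period map (the definability and functional-transcendence results underlying \cite{baldi-klingler-ullmo}, \cite{BU}), bounds the geometry of these families uniformly; in particular the positive-period-dimension hypothesis ensures one is in the regime where the o-minimal/definable counting methods apply and the ``height'' of the data $(\bfG_i, \cD_i, \bfN_i)$ that can occur is bounded. Concretely, I would construct a definable family over a definable base whose fibers are the maximal monodromy atypical subvarieties, show it has finitely many connected components by o-minimality, and then algebraize using the definable Chow / GAGA results of Bakker--Brunebarbe--Tsimerman, obtaining the finite set $\Sigma$ of triples together with the algebraic families $(f_i \colon Z_i \to Y_i, \pi_i \colon Z_i \to X)$.

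The main obstacle I anticipate is the passage from ``countably many candidate data, each giving a definable family'' to a \emph{single} definable family over a definable base — i.e., showing that the union over all $i$ of the relevant loci is itself definable (not just a countable union of definable pieces), so that o-minimal finiteness of connected components applies. This is precisely where one needs the full force of the period-map definability theory together with the monodromy bound: one must exhibit a fixed bound on the complexity (dimension of $\bfG_i$, degree of the arithmetic data) of the triples that can produce a maximal monodromy atypical subvariety, which is the content one imports from \cite{baldi-klingler-ullmo} and \cite{BU}. Once that uniform bound is in place, the remaining steps — cutting out the family, checking flatness after stratifying $Y_i$, and verifying that $\cD_{\bfH_Y} = \bfN_i(\R)^+\bfN_i(\C)^u \cdot y$ holds up to $\cGamma$ — are routine, using \Cref{atyp implies monod atyp} to reduce from special to monodromy special and the explicit description of monodromy domains as orbits.
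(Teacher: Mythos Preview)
The paper does not prove \Cref{finiteness}: it is stated as a black-box input, with the pure case attributed to Baldi--Klingler--Ullmo \cite[Theorem~6.1]{baldi-klingler-ullmo} and the mixed case to Baldi--Urbanik \cite[Theorem~7.1]{BU}. So there is no in-paper proof to compare your proposal against; what you have written is really a sketch of what those cited references do, and in the context of the present paper nothing beyond the citation is expected.

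As a sketch of the BKU/BU argument your outline is in the right neighbourhood --- Ax--Schanuel for period maps together with o-minimal definability of the period map are indeed the two essential ingredients --- but two points are off. First, you invoke a ``positive-period-dimension hypothesis'' and ``o-minimal/definable counting methods'', which suggests a Pila--Wilkie-type height-counting step; but \Cref{finiteness} as stated carries no positive-period-dimension hypothesis, and its proof in \cite{baldi-klingler-ullmo}, \cite{BU} uses no point counting. The mechanism is purely functional transcendence: one constructs a \emph{single} definable family, over a real-algebraic parameter space of sub-Hodge data and their translates, of candidate atypical intersections; Ax--Schanuel forces every atypical component to lie in a strict weakly special, and iterating on dimension produces the finite list $\Sigma$. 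Second, the obstacle you flag --- passing from countably many definable families to one --- is not handled by bounding ``heights'' or ``complexity'' of arithmetic data, but simply by parametrizing all relevant sub-Hodge data of $(\bfG,\cD)$ by a finite-dimensional real-algebraic (hence definable) set from the outset; no a priori bound on the triples is needed before the o-minimal finiteness of connected components is applied.
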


\section{Reduction to the atypical Hodge locus of zero period dimension}
Let $X$ be a smooth quasi-projective algebraic variety defined over $\C$ and $\bV$ a $\Z$VMHS on $X$. Let $(\bfG, \cD)$ be the generic Hodge datum of $\bV$ and $\varphi: X^\an \rightarrow \Gamma \backslash \cD$ the associated period map. 

\begin{definition} (see \cite{klingler-otwinowska})
    Let $Z$ be a special subvariety of $X$ for $\bV$. We say that $Z$ has zero period dimension if $\varphi(Z^\an)$ is a point; otherwise we say that $Z$ has positive period dimension.
\end{definition}

We denote by $\HL(X, \varphi)_{\atyp,\pos}$ the countable union of atypical special subvarieties of positive period dimension, and by $\HL(X, \varphi)_{\atyp,\, \zero}$ the union of atypical special subvarieties of zero period dimension, thus obtaining a decomposition: 
\[\HL(X, \varphi)_{\atyp}=\HL(X, \varphi)_{\atyp,\pos}\cup \HL(X, \varphi)_{\atyp,\, \zero}~.\]

\begin{definition}
    We define $$\HL(X, \varphi)_{\maxatyp,\, \zero}:= \HL(X, \varphi)_{\atyp,\, \zero} \setminus\HL(X, \varphi)_{\atyp,\pos}$$ the union of maximal atypical special subvarieties of zero period dimension (i.e. those not contained in an atypical special subvariety of positive period dimension).
\end{definition}

\begin{proposition} \label{reduction}
    \Cref{ZP} holds true for $X$ and $\bV$ if and only if for any quotient Hodge datum $(\bfG, \cD)\twoheadrightarrow (\bfG', \cD')$, $\HL(X, \varphi')_{\maxatyp, \, \zero}$ is not Zariski-dense in $X$.
\end{proposition}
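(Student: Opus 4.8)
The plan is to deduce \Cref{ZP} for $X$ and $\bV$ from two inputs: the geometric finiteness theorem (\Cref{finiteness}), which handles the atypical locus of positive period dimension, and the stated hypothesis on $\HL(X,\varphi')_{\maxatyp,\,\zero}$ for quotients, which will be leveraged to handle the atypical locus of zero period dimension. The ``only if'' direction is essentially formal: if \Cref{ZP} holds for $X$ and $\bV$, then $\HL(X,\varphi)_\atyp$ is a finite union of maximal atypical special subvarieties, hence not Zariski-dense; and one checks that for any quotient Hodge datum $(\bfG,\cD)\twoheadrightarrow(\bfG',\cD')$ the atypicality of special subvarieties for $\varphi'$ is controlled by atypicality for $\varphi$ (a special subvariety that is atypical for the quotient pulls back to one that is atypical for $\varphi$, or is contained in such), so $\HL(X,\varphi')_{\maxatyp,\,\zero} \subset \HL(X,\varphi)_\atyp$ is likewise not Zariski-dense. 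So the real content is the ``if'' direction.

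For the ``if'' direction, I would first use \Cref{finiteness} (via \Cref{atyp implies monod atyp}, which says maximal atypical special subvarieties are maximal monodromy atypical) to conclude that $\HL(X,\varphi)_{\atyp,\pos}$ is contained in a \emph{finite} union of maximal atypical special subvarieties of positive period dimension, hence is not Zariski-dense. It then remains to show that $\HL(X,\varphi)_{\atyp,\,\zero}$, modulo the part already absorbed into the positive-dimensional piece, is not Zariski-dense --- that is, that $\HL(X,\varphi)_{\maxatyp,\,\zero}$ is not Zariski-dense. The strategy here is to reduce to a quotient situation where the zero-period-dimension atypical locus becomes visible as an honest ``unlikely intersection'' problem. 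The key observation is that a special subvariety $Z$ of zero period dimension for $\bV$ maps to a point under $\varphi$, so the constraint defining it lives entirely in the fibers of $\varphi$; by passing to a suitable quotient Hodge datum $(\bfG,\cD)\twoheadrightarrow(\bfG',\cD')$ one can arrange that the relevant atypical special subvarieties of $X$ for $\varphi'$ still have zero period dimension and are still maximal, so that the hypothesis applies and gives non-Zariski-density of $\HL(X,\varphi')_{\maxatyp,\,\zero}$, from which one pulls back.

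The main obstacle I expect is precisely this bookkeeping of \emph{how atypicality and maximality behave under quotients}: one must show that every maximal atypical special subvariety of $X$ of zero period dimension for $\varphi$ either is already contained in an atypical special subvariety of positive period dimension (hence handled by \Cref{finiteness}) or appears in $\HL(X,\varphi')_{\maxatyp,\,\zero}$ for some quotient $\varphi'$ --- and that the codimension inequality defining atypicality is preserved when passing between $\varphi$ and $\varphi'$. This requires carefully tracking the generic Hodge data $(\bfG_Z,\cD_Z)$ of the subvarieties $Z$ and their images under the quotient, using that $\dim\varphi(Z^\an)=0$ to see that the ``loss of codimension'' causing atypicality is detected after pushing to an appropriate $(\bfG',\cD')$. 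Once this dictionary is in place, the finiteness of quotient Hodge data up to conjugacy (so that only finitely many $\varphi'$ need to be considered) closes the argument, and combining the positive- and zero-dimensional parts yields that $\HL(X,\varphi)_\atyp$ is not Zariski-dense, i.e. \Cref{ZP}.
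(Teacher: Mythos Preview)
Your proposal has a genuine gap in the treatment of the positive period dimension part, and this misidentification propagates to a misunderstanding of why the quotients are needed.

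You claim that \Cref{finiteness} (together with \Cref{atyp implies monod atyp}) shows $\HL(X,\varphi)_{\atyp,\pos}$ is contained in a finite union of maximal atypical special subvarieties of positive period dimension. This is not what \Cref{finiteness} says: it produces finitely many \emph{families} $(f_i:Z_i\to Y_i,\ \pi_i:Z_i\to X)$ whose fibers are the maximal monodromy atypical subvarieties. A single family with $\pi_i$ dominant can contribute \emph{infinitely many} atypical special fibers, each of positive period dimension for $\varphi$, and their union can a priori be Zariski-dense in $X$. So your step ``positive period dimension $\Rightarrow$ finite $\Rightarrow$ not Zariski-dense'' fails.

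Correspondingly, the role of the non-trivial quotients is inverted in your plan. The zero-period-dimension case for $\varphi$ itself is immediate from the hypothesis applied to the \emph{trivial} quotient $(\bfG',\cD')=(\bfG,\cD)$: that already says $\HL(X,\varphi)_{\maxatyp,\,\zero}$ is not Zariski-dense, and no bookkeeping about ``how atypicality behaves under quotients'' is needed there. The non-trivial quotients are what handle the \emph{positive} period dimension case. In the paper's argument, for a dominant family one has $(\bfG_i,\cD_i)=(\bfG,\cD)$, and one quotients by the normal subgroup $\bfN_i$ which is the algebraic monodromy group of the fibers of $f_i$. Under the resulting quotient period map $\varphi'_i$, each fiber of $f_i$ collapses to a point, so the atypical special fibers of $f_i$ correspond precisely to $\HL(X,\varphi'_i)_{\maxatyp,\,\zero}$, and the hypothesis applied to this particular quotient gives non-Zariski-density. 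Your proposal does not identify this mechanism, and the ``finiteness of quotient Hodge data up to conjugacy'' you invoke is not the point: the hypothesis is assumed for all quotients, and the finiteness one needs is the finiteness of the set $\Sigma$ of families from \Cref{finiteness}, which singles out the specific quotients $(\bfG,\cD)\twoheadrightarrow(\bfG,\cD)/\bfN_i$ to use.
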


\begin{proof}

    As $\HL(X, \varphi')_{\maxatyp, \, \zero} \subset \HL(X, \varphi)_{\atyp}$, \Cref{ZP} certainly implies the right hand side condition. Conversely, let us suppose that for any quotient Hodge datum $(\bfG, \cD)\twoheadrightarrow (\bfG', \cD')$, $\HL(X, \varphi')_{\maxatyp, \, \zero}$ is not Zariski-dense in $X$.

\medskip
   Let $Z\subset X$ be a maximal atypical special subvariety of $X$ for $\bV$. By \Cref{atyp implies monod atyp}, $Z\subset X$ is a maximal monodromy atypical weakly special subvariety of $X$ for $\bV$. Thus $Z$ is a fiber of one of the finitely many families $f_i:Z_i \rightarrow Y_i$, $i \in \Sigma$, appearing in \Cref{finiteness}. For simplicity we will call such a fiber an atypical special fiber of $f_i$. We are reduced to showing that the union of the atypical special fibers of $f_i$, $i \in \Sigma$, is not Zariski-dense in $X$.

\medskip
Let us say that a family $(f, \pi)$ of subvarieties of $X$ is dominant if $\pi$ is. 
   If $f_i:Z_i \rightarrow Y_i$ is not dominant, then a fortiori the union of all atypical special fibers of $f_i$, which is contained in the image of $\pi_i: Z_i \rightarrow X$, is not Zariski-dense in $X$. Let $\Sigma^\dom\subset \Sigma$ denote the finite set of dominant families. We are thus reduced to showing that the union of the atypical special fibers of $f_i:Z_i \rightarrow Y_i$, $i \in \Sigma^\dom$, is not Zariski-dense in $X$. 
   
\medskip

   Let $i \in \Sigma^\dom$. Then necessarily $(\bfG_i, \cD_i) = (\bfG, \cD)$ (otherwise the image of $\pi_i$ would be contained in the Hodge locus of $X$ for $\bV$ with Hodge subdatum $(\bfG_i,\cD_i)$, a contradiction to the fact that $\pi_i$ is dominant). Let $(\bfG'_i, \cD'_i)$ be the quotient Hodge datum of $(\bfG, \cD)$ by $\bfN_i$ and let $$\varphi'_i: X^\an \stackrel{\varphi}{\rightarrow} \Gamma \backslash \cD \twoheadrightarrow {\Gamma'_i} \backslash \cD'_i$$ be the associated quotient period map. As $\bfN_i$ is the algebraic monodromy group of the fibers of $f_i$, the atypical special fibers of $f_i$ are in bijection with the maximal atypical special subvarieties of zero period dimension of $X$ for $\varphi'_i$. Hence the union of all atypical special fibers of the morphism $f_i$ coincides with $\HL(X, \varphi'_i)_{\maxatyp, \, \zero}$. It follows from our hypothesis that this is not Zariski-dense in $X$. Thus \Cref{ZP} holds true. 
   
\end{proof}

\section{Proof of \Cref{main-shimura}}

In view of \Cref{reduction}, \Cref{main-shimura} follows from:

\begin{theorem} \label{shimura-zero}
    Let $X\stackrel{\iota}{\hookrightarrow} \cS$ be a Hodge generic irreducible smooth quasi-projective algebraic subvariety of a mixed Shimura variety $\cS$. If no non-trivial image $X'$ of $X$ in a quotient Shimura variety $\cS \twoheadrightarrow \cS'$ is defined over $\oQ$, then $\HL(X, \iota)_{\maxatyp, \, \zero}$ is not Zariski-dense in $X$.
\end{theorem}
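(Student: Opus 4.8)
The plan is to reduce the statement to a Galois-theoretic dichotomy applied to the atypical special points of zero period dimension. The key observation is that an atypical special subvariety of zero period dimension is precisely an atypical special \emph{point} of $X$ (after possibly passing to a quotient, but here we work directly with $\iota$), i.e. a point $x \in X(\C)$ whose image $\iota(x) \in \cS$ is a special point of the mixed Shimura variety $\cS$ at which the Mumford-Tate group drops more than expected by dimension count. Since $\cS$ has a canonical model $\cS_{\oQ}$ over $\oQ$ and all special points of $\cS$ are defined over $\oQ$, the set of special points of $\cS$ is a countable subset of $\cS(\oQ)$. Therefore $\HL(X,\iota)_{\maxatyp,\,\zero}$ is a countable set of points of $X(\C)$ each mapping into $\cS(\oQ)$.

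The core of the argument is a spreading-out / Galois-orbit argument. Suppose for contradiction that $\HL(X,\iota)_{\maxatyp,\,\zero}$ is Zariski-dense in $X$. The variety $X$ and the immersion $\iota$ are defined over some finitely generated subfield $L \subset \C$; enlarging $L$ we may assume $L$ is the field of definition of the pair $(X,\iota)$ inside $\cS_{\oQ} \times_{\oQ} L$. For any $\sigma \in \Aut(\C/\oQ)$, the conjugate $\sigma(X) \subset \cS$ is again a Hodge-generic irreducible subvariety (since $\cS$ and its Hodge-theoretic structure descend to $\oQ$, conjugation permutes special points and preserves the atypicality count — this uses that $\iota$, being a subvariety of a mixed Shimura variety, is \emph{absolute}, which holds automatically here). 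If $x \in X(\C)$ is an atypical special point of zero period dimension lying in $\HL(X,\iota)_{\maxatyp,\,\zero}$, then $\iota(x) \in \cS(\oQ)$, so $\sigma(x)$ makes sense and $\sigma(x)$ is again an atypical special point of zero period dimension of $\sigma(X)$ for the variation $\sigma^*\iota$. Since $\HL(X,\iota)_{\maxatyp,\,\zero}$ is Zariski-dense in $X$, its image $\iota(\HL(X,\iota)_{\maxatyp,\,\zero})$ is Zariski-dense in the (closed, $L$-definable) subvariety $\iota(X)$; applying $\sigma$ and using that these are $\oQ$-points we get that $\sigma(X)$ is forced to equal the Zariski closure of the $\sigma$-conjugate set, hence $\sigma(X) \subseteq \overline{X}^{\mathrm{Zar over }\oQ}$. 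Running over all $\sigma \in \Aut(\C/\oQ)$ shows that the Zariski closure of $X$ over $\oQ$ — call it $X_{\oQ}$, a subvariety of $\cS_{\oQ}$ — satisfies $\sigma(X_{\oQ}) = X_{\oQ}$ for all $\sigma$, hence $X_{\oQ}$ is defined over $\oQ$; and moreover $X$ is a geometrically irreducible component of $(X_{\oQ})_{\C}$. One then argues that $X$ itself, being Hodge-generic and carrying a Zariski-dense set of $\oQ$-special points, must in fact coincide with $(X_{\oQ})_{\C}$, i.e. $X$ is defined over $\oQ$ — contradicting the hypothesis (take the identity quotient $\cS \twoheadrightarrow \cS$, so $X' = X$ is the non-trivial image in question).

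More precisely, the step that needs care is showing that Zariski-density of $\oQ$-points in $X$ forces $X$ to descend to $\oQ$. The mechanism: let $X_{\oQ}$ be the Zariski closure of $X$ in $\cS_{\oQ}$; it is a subvariety over $\oQ$, and $X$ is one of its finitely many $\C$-irreducible components, permuted transitively by $\Aut(\C/\oQ)$ (or by $\Gal$ of a finite extension). A Zariski-dense subset of $X$ consisting of $\oQ$-points cannot lie on a proper subvariety, and each such $\oQ$-point is fixed by $\Aut(\C/F)$ for $F$ its (number) field of definition; by a standard argument (density of the $\oQ$-locus forces the Galois action on components to fix the component through a dense set of points, hence to fix the component), $X$ is Galois-stable, hence $X = (X_{\oQ})_{\C}$ is defined over $\oQ$. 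For the $\sigma$-conjugation to respect atypicality of special points we invoke that special subvarieties of $\cS$ are defined over $\oQ$ and that the dimension-count defining atypicality ($\codim_{\iota(X)}\iota(Z) $ vs. $\codim_{\cS}\cS_Z$, equivalently the analogous count through the period map in the reminder section) is preserved by field automorphisms — this is exactly the content of $\iota$ being absolute, which is stated in the excerpt to hold for any subvariety of a mixed Shimura variety.

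\textbf{Main obstacle.} I expect the principal difficulty to be the passage from "$\HL(X,\iota)_{\maxatyp,\,\zero}$ Zariski-dense" to "$X$ defined over $\oQ$" while correctly tracking the quotient structure: the hypothesis only excludes non-trivial images $X'$ in quotient Shimura varieties being defined over $\oQ$, so one must be careful that the maximal atypical special points of zero period dimension, as they appear in \Cref{reduction}, genuinely detect a \emph{non-trivial} quotient through which the relevant monodromy data factors. Equivalently, one must show that the weakly-special/monodromy-atypical machinery of \Cref{finiteness} matches the clean "special point on $\cS$" picture in the Shimura case, so that Zariski-density of the zero-dimensional atypical locus for the quotient $\varphi'_i$ produces a $\oQ$-rationality statement precisely for the non-trivial image $X'_i$ in $\cS'_i$, contradicting the hypothesis. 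Handling the fields of definition uniformly (working over a finitely generated $L$ and spreading, versus working directly with $\Aut(\C/\oQ)$) and ensuring the absoluteness input is exactly what is needed — and no more — is the delicate bookkeeping at the heart of the proof.
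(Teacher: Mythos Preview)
Your proposal has a genuine gap at the very first step. You assert that an atypical special point $x$ of $X$ (i.e.\ a maximal atypical special subvariety of zero period dimension) has $\iota(x)$ equal to a special point of $\cS$, and hence $\iota(x)\in\cS(\oQ)$. This is false: what it means for $\{x\}$ to be a special subvariety of $X$ is that $\{x\}$ is an irreducible component of $\iota^{-1}(S_x)$ for the special subvariety $S_x\subset\cS$ attached to the Mumford--Tate group of $\bV_x$. That special subvariety $S_x$ is defined over $\oQ$, but it can be positive-dimensional (the paper notes this explicitly), and $\iota(x)$ is then merely some, typically transcendental, point of $S_x$. So there is no reason for $\iota(x)$ to lie in $\cS(\oQ)$, and your density-of-$\oQ$-points argument does not get started. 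The conflation is between ``special point of $\cS$'' in the CM sense (those are indeed $\oQ$-points) and ``point of $X$ lying on a proper special subvariety of $\cS$'' (those need not be).

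The paper's proof works precisely because it avoids ever needing $\iota(x)\in\cS(\oQ)$. Instead it spreads $\iota:X\hookrightarrow\cS$ to $\Phi:\cX\to\cS$ over a curve $\cB$ defined over an algebraically closed $L\subset K$ of transcendence degree one less, and uses that the \emph{subvariety} $S_x\subset\cS$ is defined over $\oQ$ to conclude that the component $Z_x$ of $\Phi^{-1}(S_x)$ through $x$ is defined over $L$. If $\Phi(Z_x)$ is a point then that point is $L$-rational, and Zariski-density of such points would force $X$ down to $L$; otherwise each $Z_x$ is a positive-dimensional atypical special subvariety of $\cX$, and the geometric Zilber--Pink (\Cref{finiteness}) organizes them into finitely many families, the dominant ones producing a non-trivial quotient $X'=\cX'\subset\cS'$ defined over $L$, whence the induction on transcendence degree. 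Your outline never invokes \Cref{finiteness}, but that input is exactly what replaces the unavailable $\oQ$-rationality of the individual points $\iota(x)$.
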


\begin{proof}[Proof of \Cref{shimura-zero}]

The proof is by induction on the minimal transcendence degree~$n$ over $\oQ$ of an algebraically closed field~$K$ of definition of $X$. By hypothesis $n\geq 1$. Let $\Lambda:= \HL(X, \varphi)_{\maxatyp, \, \zero}$ be the set of maximal atypical special points of $X$. Suppose by contradiction that $\Lambda$ is Zariski-dense in~$X$.

\medskip
Choose $L$ an algebraically closed field, $\oQ \subset L \subset K$, such that $K$ has transcendence degree~$1$ over $L$. This is possible as $n \geq 1$. 
Let $\Phi: \cX \lo \cS$ be a spread-out over $L$ of the morphism $\iota:X \hookrightarrow \cS$. As $\cS$ is defined over $\oQ$, without loss of generality we can assume that: 
\begin{enumerate}
    \item there exists an equisingular morphism of complex algebraic varieties $\pi:\cX \rightarrow \cB$, defined over~$L$; 
    \item the base $\cB$ is irreducible, smooth over $L$, of dimension~$1$;
    \item the variety $X$ is the fiber of $\pi$ at a $K$-point of $B$, and is $L$-Zariski-dense in $\cX$; and $\Phi$ restricted to $X$ coincides with $\iota$;
    \item the restriction of $\Phi: \cX \lo \cS$ to each fiber of $\pi$ is a closed immersion.
\end{enumerate}

\medskip
In particular the normalized image~$Y$ of the period map $\Phi: \cX \lo \cS$ has complex dimension $\dim_\C X$ or $\dim_\C X +1$. In the first case, $Y$ is irreducible, contains $X$ and has the same dimension as $X$, hence $Y=X$. As $\Phi$ and $Y$ are defined over $L$, it follows that $X$ is defined over $L$, a contradiction to the minimality of $n$. Thus $\dim_\C Y = \dim_\C X +1$, i.e., $\Phi$ is generically finite.

\medskip
For each $x \in \Lambda$, let $S_{x} \subset \cS$ be the unique special subvariety whose generic Mumford-Tate group is the one of $x$ and such that $x$ is an irreducible component of $X \cap S_{x}$ (notice that although $x$ is a point, the special subvariety $S_x$ can be positive dimensional). We denote by $Z_x \subset \cX$ one irreducible component of $\Phi^{-1}(S_x)$ containing~$x$. Thus $Z_{x} \subset \cX$ is a special subvariety of $\cX$ for $\Phi$, and $Z_{x}$ is defined over~$L$. As $\dim_\C \cX = \dim_\C X +1$, the special subvariety $Z_{x}$ of $\cX$ is atypical as soon as $\Phi(Z_{x})$ is not a point (namely~$x$).

\medskip
Let $\Lambda' \subset \Lambda$ be the subset of points $x$ such that $\Phi (Z_{x})=x$. If $x \in \Lambda'$, it follows that $x = \Phi(Z_{x})$ is a point of $\cS$ defined over $L$, as both $Z_x$ and $\Phi$ are defined over $L$. Thus the Zariski-closure of $\Lambda'$ in $X$ is defined over~$L$. It follows that $\Lambda'$ is not Zariski-dense in $X$, as $X$ cannot be defined over~$L$.

\medskip
Replacing $\Lambda$ by $\Lambda- \Lambda'$ if necessary, we can thus without loss of generality assume that each $Z_{x}$ is an atypical subvariety of $\cX$ for $\Phi$, of positive period dimension. According to \Cref{finiteness}, any such $Z_{x}$ is contained in a fiber of one of the finitely many families $(f_i: Y_i \lo B_i, \pi_i: Y_i \lo \cX)$, $i \in \Sigma$, of maximal monodromy atypical subvarieties of $\cX$ for $\Phi$. Let $\Lambda_i \subset \Lambda$, $i \in \Sigma$, be the subset of $x \in \Lambda$ such that $Z_{x}$ is contained in a fiber of $f_i$.

\medskip
Consider the Zariski-closure $T \subset \cX$ of 
$$\bigcup_{i \in (\Sigma - \Sigma^\dom)} \bigcup_{x \in \Lambda_i} Z_{x},$$
where $\Sigma^\dom$ denotes, as in the proof of \Cref{reduction}, the collection of $i \in \Sigma$ such that $\pi_i$ is dominant.
The variety $T$ is, by definition of $\Sigma^\dom$, a strict closed algebraic subvariety of $\cX$. It is also defined over $L$ as each $Z_{x}$ is. As $X \subset \cX$ is $L$-Zariski-dense in $\cX$, it follows that $T \cap X$, hence also its subset $\bigcup_{i \in(\Sigma - \Sigma^\dom)} \bigcup_{x \in \Lambda_i} \{x\}$, is not Zariski-dense in~$X$. 

\medskip Therefore, there exists $i \in \Sigma^\dom$ such that $\Lambda_i$ is Zariski-dense in $X$. Without loss of generality, we can replace $\Lambda$ by such a $\Lambda_i$. Let us write $(f', \pi', \bfN):= (f_i, \pi_i, \bfN_i)$. As in the proof of \Cref{reduction}, $(\bfG_i, \cD_i) = (\bfG, \cD)$. Let $(\bfG', \cD')$ be the quotient Shimura datum of $(\bfG, \cD)$ by $\bfN$ and let \[\Phi': \cX \stackrel{\Phi}{\rightarrow} \cS \stackrel{p}{\twoheadrightarrow} \cS'\] be the associated quotient period map. Let $X':= p(X) \subset \cX':= \Phi'(\cX) \subset \cS'$. As the group $\bfN$ is the algebraic monodromy group of the fibers of $f'$, $\Phi'(Z_{x})=p(x)$ for each $x \in \Lambda$. Thus $\Phi'(Z_{x})=p(x)$ is a maximal atypical special point $x'$ of $X'\hookrightarrow \cS'$. It is defined over~$L$ as $Z_x$ and $\Phi'$ are. Let $\Lambda' \subset \cS'$ be the set of such points. The image $\cX'= \Phi'(\cX)$ is also defined over $L$, as $\cX$ and $\Phi'$ are. As $\Lambda$ is Zariski-dense in $X$ and $X$ is $L$-Zariski-dense in $\cX$, it follows that $\Lambda'$ is Zariski-dense both in $\cX'$ and $X'= \Phi'(X)= p(X)$. Thus the quotient $X' = \cX'$ of $X$ is a subvariety of $\cS'$ defined over~$L$. It is non-trivial: indeed $X$ contains a Hodge-generic point of $\cS$, hence $X'$ contains a Hodge-generic point of $\cS'$ and it also contains a dense collection of special points. 

\medskip
If $n=1$, this means that $X'$ is defined over $L= \oQ$, a contradiction to our assumption that $X$ has no non-trivial quotient defined over $\oQ$. 

\medskip
If $n>1$: as any quotient of $X'$ is a quotient of $X$, and $X$ has no non-trivial quotient defined over~$\oQ$, $X'$ has also no non-trivial quotient defined over~$\oQ$. As $X'$ is defined over a field $L$ of transcendence degree $n-1$ over $\oQ$, our induction hypothesis applied to $X'$ implies that $\Lambda'$ is finite; hence its Zariski-closure $X'$ has dimension zero: a contradiction to the non-triviality of $X'$.

\end{proof}


\section{Proof of \Cref{main-absolute}}

\subsection{Preliminaries}

As for the proof of \Cref{main-shimura}, spreading is the main tool in the proof of \Cref{main-absolute}. This requires the $\Z$VMHSs we consider to be geometric. In this section, we discuss some of the consequences of \Cref{def1} that we will need. 
First, we need to extend a bit \Cref{def1}.

\begin{definition} \label{def2}
   Given a non-necessarily geometric $\Z$VMHS $\bV$ on $X$, we say that it is defined over a field $K \subset \C$ if (possibly after replacing $X$ by a Zariski-dense open subset) the variety $X$, the filtered module with integrable connection $(\cV, F^\bullet, \nabla)$ on $X$ corresponding to $\bV$ under the Deligne-Riemann-Hilbert correspondence, its weight filtration $W_\bullet$ and the graded-polarisations $Q_i : \Gr^W_i \times \Gr^W_i \lo \cO_X$ are all defined over~$K$. 
   
   We say that a period map $\varphi: X^\an \lo \Gamma \backslash \cD$ is defined over a field $K \subset \C$ if it is the period map of a $\Z$VMHS defined over $K$.
\end{definition}

\begin{lemma} \label{subquotient}
Let $X$ be a smooth irreducible quasi-projective algebraic variety over $\C$, and $\bV$ a $\Z$VMHS on $X$ defined over~$K$. Then the period map of any subquotient of $\bV$ is defined over~$K$.
\end{lemma}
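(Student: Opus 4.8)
The plan is to reformulate everything through the Deligne–Riemann–Hilbert correspondence, where ``$\bV$ defined over $K$'' means that $X$, the filtered bundle with integrable connection $(\cV,F^\bullet,\nabla)$, the weight filtration $W_\bullet$ and the graded polarizations $Q_i$ all descend to $X_K$, and then to show that the algebraic sub-object of these data which cuts out the given subquotient already descends to $X_K$. First I would reduce to the case of a \emph{sub}-$\Z$VMHS: writing a subquotient as $\bW=\bW_2/\bW_1$ with $\bW_1\subseteq\bW_2\subseteq\bV$, and granting that a sub-$\Z$VMHS of a $\Z$VMHS defined over $K$ is again defined over $K$, one applies this to $\bV$ to get that $\bW_2$ is defined over $K$, then to $\bW_2$ to get that $\bW_1$ is, and then the quotient $\bW_2/\bW_1$ is manifestly defined over $K$ (its filtered bundle with connection is the quotient $\cV_2/\cV_1$ with the induced connection, weight filtration and graded polarizations, all over $K$). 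Since ``$\bW$ defined over $K$'' implies ``the period map of $\bW$ defined over $K$'' directly from \Cref{def2}, it suffices to prove: a sub-$\Z$VMHS of a $\Z$VMHS defined over $K$ is defined over $K$.

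Under Deligne–Riemann–Hilbert such a sub-$\Z$VMHS $\bW\subseteq\bV$ corresponds to a $\nabla$-flat $\cO$-subbundle $\cW\subseteq\cV$ over $X_\C$ which is strict for $F^\bullet$ and for $W_\bullet$ and whose graded pieces $\Gr^W_k\cW\subseteq\Gr^W_k\cV$ are sub-variations polarized by the restrictions of the $Q_k$. The induced filtrations and polarizations on $\cW$ are automatically defined over $K$ once $\cW$ itself is, since $F^\bullet$, $W_\bullet$ and the $Q_i$ are. So the whole statement comes down to showing that the flat subbundle $\cW\subseteq\cV$ is defined over $X_K$.

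To descend $\cW$ I would argue by dévissage along the weight filtration. On each graded piece, $\Gr^W_k\bW\subseteq\Gr^W_k\bV$ is a sub-variation of a polarizable $\Z$VHS, hence by Deligne's semisimplicity it is the orthogonal direct summand cut out by an idempotent $p_k$ which is $\nabla$-flat, $F$-preserving and self-adjoint for $Q_k$. One is thus reduced to seeing that such operators are defined over $K$, i.e. lie in the space of $\nabla$-flat, $F$-preserving endomorphisms of $\Gr^W_k\cV$ over $X_K$: the tools available are that the space of $\nabla$-flat endomorphisms of $\Gr^W_k\cV$ is $H^0_{\mathrm{dR}}(X,\mathcal{E}nd(\Gr^W_k\cV))$, which commutes with the extension $K\subseteq\C$, so it is of the form $E_K\otimes_K\C$, and that ``preserves $F^\bullet$'' is a $K$-linear condition; writing $p_k$ in a $\C$-basis of $K$-coefficients and using strictness of $F^\bullet$ forces each coefficient to be a $\nabla$-flat, $F$-preserving endomorphism over $X_K$. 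Once $\Gr^W_k\cW=\mathrm{im}(p_k)$ is known over $X_K$ for all $k$, one recovers $W_k\cW=\cW\cap W_k\cV$ by successive extensions, each being an element of an $\mathrm{Ext}^1$ group of filtered objects with connection over $X_K$ that is pinned down over $K$ because it is the sub-extension of the ambient one — coming from $\bV$, hence over $K$ — singled out by the graded inclusions just shown to be over $K$. This yields $\cW$, hence $\bW$, over $K$.

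The step I expect to be the genuine obstacle is the assertion in the previous paragraph that a morphism of (pure, polarizable) variations of Hodge structure — a priori produced from the Betti $\Q$-structure through the transcendental period comparison — is compatible with, and defined over, the de Rham structure over $K$; in other words, that the $\Q$-rational idempotent $p_k$ lies in the $K$-form $E_K$ and not merely in its complexification $E_K\otimes_K\C$. This is a statement in the circle of ideas around ``de Rham $=$ Hodge over $K$'', and it is the only point where more than formal bookkeeping is needed. I expect the base-change-plus-strictness argument above (using that, for a \emph{pure} polarizable variation, flatness together with preservation of $F^\bullet$ already characterizes morphisms of variations, so the two $K$- and $\Q$-forms of the flat-endomorphism algebra are the $F$-compatible ones on both sides) to settle it, but this is the delicate point; one should additionally check that the subquotient $\bW$ so produced is admissible at the boundary, which follows from admissibility of $\bV$.
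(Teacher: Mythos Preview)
Your approach differs fundamentally from the paper's and has a genuine gap at precisely the point you flagged as delicate.

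You try to prove the stronger assertion that any sub-$\Z$VMHS $\bW\subset\bV$ is itself defined over $K$. This is false in general. Take $X$ a point, $E/\Q$ the elliptic curve $y^2=x^3-x$ with CM by $\Z[i]$, and $\bV = H^1(E\times E,\Z)$, which is defined over $K=\Q$. The graph of $i\in\textnormal{End}(E)\otimes\Q$ acting on $H^1(E,\Q)$ is a $\Q$-rational sub-Hodge structure $\bW\subset\bV$ isomorphic to $H^1(E)$. On the de Rham side, however, the automorphism $(x,y)\mapsto(-x,iy)$ acts on $H^1_{\mathrm{dR}}(E/\Q)$ by $\mathrm{diag}(i,-i)$ in the basis $\{dx/y,\,x\,dx/y\}$, so the corresponding subspace of $H^1_{\mathrm{dR}}(E/\Q)^{\oplus 2}$ is only defined over $\Q(i)$, not over $\Q$. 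Your ``base-change-plus-strictness'' argument cannot rescue this: the $\Q$-form of flat, $F$-preserving endomorphisms (coming from the Betti lattice) and the $K$-form (coming from de Rham) are two \emph{distinct} rational structures on the same complex vector space, related by the transcendental period matrix. Writing the Betti-rational idempotent $p_k=\sum c_j e_j$ with $e_j\in E_K$ gives no control whatsoever on the coefficients $c_j\in\C$; strictness of $F^\bullet$ is irrelevant here.

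The paper avoids this by proving only what the lemma actually claims: the \emph{period map} of $\bW$ is defined over $K$, not $\bW$ itself. The key observation is that the isotypic component $\widetilde{\bW}=\sum_{\tau\in\Hom(\bW,\bV)}\tau(\bW)$ of $\bW$ inside $\bV$ is stable under $\Aut(M/K)$ (for $M\supset K$ any field of definition of $\bW$), hence descends to $K$; and since $\widetilde{\bW}$ is a sum of copies of $\bW$, it has the same Mumford--Tate group and therefore the same period map as $\bW$. In the example above the isotypic component is all of $H^1(E)^{\oplus 2}$, visibly defined over $\Q$ and with the same period map as $H^1(E)$. Quotients are then handled by dualizing.
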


\begin{proof}
Let $\varphi$ be the period map of a sub-$\Z$VMHS $\bW \subset \bV$. In general $\bW$ is defined only over an extension $M$ of~$K$. Notice however that the isotypic component 
\[\sum_{\tau\in \Hom(\bW, \bV)} \tau(\bW) \subset \bV\] 
of $\bW$ in $\bV$ is defined over~$K$, as the corresponding filtered module with integrable connection, weight filtration and polarization are stable under $\Aut(M/K)$. Moreover, this isotypic component has the same period map $\varphi$ as $\bV$. It follows that $\varphi$ is defined over~$K$.

\medskip

Arguing with duals, we deduce in the same way that the period map of a quotient of a $\Z$VMHS defined over~$K$ is also defined over~$K$. The result follows.
\end{proof}

\begin{corollary} \label{geometrically defined= defined}
Let $X$ be a smooth irreducible quasi-projective algebraic variety over $\C$, and $\varphi:X^\an \lo \Gamma \backslash \cD$ a geometric period map geometrically defined over~$K$ in the sense of \Cref{def1}(1). Then $\varphi$ is defined over~$K$ in the sense of \Cref{def2}.
\end{corollary}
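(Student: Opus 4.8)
The plan is to reduce at once to the tautological case $\bV = R^i g_* \Z$ and then to invoke the algebraicity, and the functoriality in the base field, of the de Rham realization of a geometric $\Z$VMHS. By \Cref{def1}(1), after shrinking $X$ (which, together with everything below, may be taken defined over $K$), there is a flat surjective equisingular morphism $g : Z \lo X$ defined over $K$ together with sub-$\Z$VMHS $\bV'' \subset \bV' \subset R^i g_* \Z$ on $X$ such that $\bV \cong \bV'/\bV''$. By \Cref{subquotient}, the period map $\varphi$ of $\bV$ is defined over $K$ in the sense of \Cref{def2} as soon as $R^i g_* \Z$ is defined over $K$ in that sense. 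So the entire task is to show that $R^i g_* \Z$, together with its weight filtration and its graded polarizations, is defined over $K$ whenever $g$ is.

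For the latter I would unwind the de Rham side of $R^i g_* \Z$. Shrinking $X$ over $K$ we may assume $g$ is topologically locally trivial, so that $R^i g_* \Z$ does underlie an admissible graded-polarizable $\Z$VMHS. Choosing over $K$ --- using Nagata compactification and resolution of singularities in characteristic zero, both available over $K$, and using equisingularity to do this uniformly over $X$ after a further shrink --- a smooth simplicial hyperresolution $Z_\bullet \lo Z$ and relative smooth compactifications $\bar Z_\bullet$ with normal crossing boundary $D_\bullet$ over $X$, one computes the filtered $\cO_X$-module with integrable connection $(\cV, F^\bullet, \nabla)$ underlying $R^i g_* \Z$ as the relative hypercohomology along $g$ of the logarithmic de Rham complex $\Omega^\bullet_{\bar Z_\bullet/X}(\log D_\bullet)$, following Deligne: $\nabla$ is the algebraic Gauss--Manin connection, $F^\bullet$ is the (log, resp. b\^ete) Hodge filtration, and the weight filtration $W_\bullet$ is the one cut out by order of log poles together with simplicial degree, shifted by $i$. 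All of these are produced by base-field-independent algebraic operations --- relative de Rham direct image, truncation, hypercohomology --- applied to data defined over $K$, hence are themselves defined over $K$; in particular the formation of the Hodge and weight subbundles of $\cV$ commutes with the inclusions $\oQ \subset K \subset \C$. Likewise the graded polarizations $Q_i$ on $\Gr^W_i$ are, by Deligne's construction, induced by cup-product (Poincar\'e--Lefschetz) pairings on the de Rham cohomology of the smooth proper strata attached to $\bar Z_\bullet$ and $D_\bullet$, and such pairings are algebraic and defined over $K$ --- positivity, the only transcendental ingredient, is not part of \Cref{def2}. Thus $X$, $(\cV, F^\bullet, \nabla)$, $W_\bullet$ and the $Q_i$ are all defined over $K$, i.e.\ $R^i g_* \Z$ is defined over $K$ in the sense of \Cref{def2}; combining with \Cref{subquotient} gives the corollary.

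The step that must be handled with care rather than by hand is the assertion that the weight filtration of the geometric $\Z$VMHS $R^i g_* \Z$ is algebraic and defined over $K$ in families. For $g$ proper with $Z$ smooth this is vacuous (the variation is pure), but in the equisingular, possibly non-proper and singular, setting it is exactly the content of the algebraicity of Deligne's weight filtration for admissible mixed variations, as established via mixed Hodge modules or via the relative simplicial constructions used in \cite{brosnan-pearlstein}, \cite{brosnan-pearlstein-schnell}, \cite{BBKT}: these references say precisely that the weight filtration is given by algebraic subbundles of $\cV$ whose construction is functorial, hence insensitive to the field of definition. Granting this, the remainder is the by-now standard observation that algebraic de Rham cohomology, the Gauss--Manin connection, the Hodge filtration and cup products are ``motivic'' enough that their formation commutes with extension of the base field, so that all the data descend from $\C$ to $K$.
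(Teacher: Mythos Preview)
Your proof is correct and follows the same two-step route as the paper: reduce via \Cref{subquotient} to showing that $R^i g_* \Z$ itself is defined over~$K$, then establish the latter. The only difference is cosmetic: the paper outsources the second step to \cite[Section~2]{urbanik-absolute} (pure case, with the mixed case declared analogous), whereas you unpack that reference by sketching the relative simplicial/log de Rham construction directly.
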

\begin{proof}
    By definition $\varphi$ is the period map of a subquotient $\bV$ of some $\Z$VMHS $R^i g_*\Z$, for $g: Z \lo X$ a flat surjective equisingular morphism defined over~$K$. 
    
    It is classical that $R^i g_*\Z$ is defined over~$K$ in the sense of \Cref{def2}. See \cite[Section 2]{urbanik-absolute} for a detailed proof in the pure case (in the more general setting of what Urbanik calls motivic $\Z$VHS); the proof in the mixed case is similar. 

    The fact that $\varphi$ is defined over~$K$ thus follows immediately from \Cref{subquotient}.
    \end{proof}

\begin{proposition} \label{fields of definition}
    Let $X$ be a smooth irreducible quasi-projective algebraic variety over $\C$, and \[\varphi: X^\an\stackrel{f^\an}{\lo} Y^\an \stackrel{\iota}{\lo} \Gamma \backslash \cD\] a factorized period map as in \Cref{def0}.
    \begin{enumerate}
     \item If $\varphi$ is geometric in the sense of \Cref{def0}, then $\iota: Y^\an  \lo \Gamma \backslash \cD$ is geometric. 
     
     \item If $\varphi$ is defined over~$K \subset \C$, then $Y$, $f: X\lo Y$, and $\iota$ are also defined over $K$.   

\item If $\varphi$ is geometrically defined over~$K$ then $\iota$ is geometrically defined over~$K$.
\end{enumerate}
\end{proposition}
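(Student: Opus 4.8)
I would treat the three assertions in the order (1), (2), (3), deducing (3) from the arguments of (1) and (2).

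\emph{Part (1).} Say $\bV$ is a subquotient of $R^{i}g_{*}\Z$ for a flat surjective equisingular $g\colon Z\lo X$. Since $f$ is dominant I pick an irreducible closed multisection $Y'\subset X$, i.e.\ one mapping generically finitely onto $Y$, and after shrinking $Y$ (and $X$, $Y'$, $Z$ accordingly) I may assume $f|_{Y'}\colon Y'\lo Y$ is finite étale. Base–changing $g$ along $Y'\hookrightarrow X$ gives a flat surjective equisingular $g'\colon Z':=Z\times_{X}Y'\lo Y'$ with $\bV|_{Y'}=(f|_{Y'})^{*}\bV_{Y}$ a subquotient of $R^{i}g'_{*}\Z$. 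Set $h:=(f|_{Y'})\circ g'\colon Z'\lo Y$; as $f|_{Y'}$ is finite, the Leray spectral sequence collapses to $R^{i}h_{*}\Z=(f|_{Y'})_{*}R^{i}g'_{*}\Z$, and $h$ is flat, surjective and equisingular. Since $\bV_{Y}\hookrightarrow(f|_{Y'})_{*}(f|_{Y'})^{*}\bV_{Y}$ (unit of the finite étale adjunction, split by the trace), $\bV_{Y}$ is a subquotient of $R^{i}h_{*}\Z$; hence $\iota$, being the period map of $\bV_{Y}$, is geometric. The equisingularity and shrinking bookkeeping here is routine.

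\emph{Part (2).} This is the main point, and I would prove it by Galois descent. Fix a $K$–model of $X$ together with the de Rham data $(\cV,F^{\bullet},\nabla,W_{\bullet},Q_{i})$ over $K$ given by the hypothesis (shrinking $X$ does not change $Y$, $f$, $\iota$ up to the standard identifications). The pair $(Y,f\colon X\lo Y)$, and likewise $(Y,f,\iota)$, is rigid: an automorphism of $Y$ commuting with $f$ fixes the dense set $f(X)$, hence is the identity. So it suffices to produce, for each $\sigma\in\Aut(\C/K)$, an isomorphism $(Y^{\sigma},f^{\sigma})\cong(Y,f)$; rigidity then makes it unique and forces the cocycle condition, and spreading $f$ out over a finitely generated subextension makes the descent datum effective, so $(Y,f)$ descends to $K$. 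Once $Y$, $f$ are over $K$, the de Rham data on $Y$ — obtained by descending the $K$–de Rham data on $X$ along the dominant $K$–morphism $f$, hence again over $K$ — shows $\bV_{Y}$, i.e.\ $\iota$, is defined over $K$.

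\emph{Part (2), the crux.} To build $(Y^{\sigma},f^{\sigma})\cong(Y,f)$ I would show that the factorization $f\colon X\lo Y$ is already determined by the de Rham data alone. By the uniqueness of the factorization \cite{bakker-brunebarbe-tsimerman-pure-image,bakker-brunebarbe-tsimerman-mixed-image}, $(Y,f)$ is a canonical invariant of $\bV$; its fibres are the maximal connected subvarieties of $X$ on which $\varphi$ is constant, and by Deligne–André \cite{andre-mixed} and the Riemann–Hilbert correspondence "$\varphi$ constant on $Z$" is an algebraic condition on the restricted de Rham data (finiteness of the algebraic monodromy of $\bV|_{Z}$ together with parallelism of the Hodge filtration on the finite trivialising cover; in the mixed case one also uses $W_{\bullet}$). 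Hence the isoperiodic equivalence relation on $X$, and with it $Y$ as the normalized quotient, depends only on data defined over $K$, so it is $\Aut(\C/K)$–stable. \textbf{This is the genuinely non-formal step}: carefully identifying the precise model $Y$ (not just its birational class) and the arithmetic group $\Gamma$ entering the target with objects over the field of definition of the de Rham data. Here one either pushes through the Hodge-theoretic characterisation of constancy of the period map together with the $\Z$–structure bookkeeping, or — when $\bV$ is of geometric origin — invokes the fields-of-definition results of \cite{urbanik-absolute} together with the o-minimal, definable-Chow construction of the image in \cite{bakker-brunebarbe-tsimerman-pure-image,bakker-brunebarbe-tsimerman-mixed-image}, which is already defined over $K$ when fed de Rham data over $K$.

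\emph{Part (3).} Suppose $\varphi$ is geometrically defined over $K$. By \Cref{geometrically defined= defined}, $\varphi$ is defined over $K$ in the sense of \Cref{def2}, so part (2) gives that $Y$ and $f\colon X\lo Y$ are defined over $K$. Now I rerun the argument of part (1) over $K$: since $K$ is infinite the multisection $Y'\subset X$ can be chosen over $K$ (e.g.\ as an intersection of hyperplane sections), whence $g'\colon Z'\lo Y'$ and $h\colon Z'\lo Y$ are defined over $K$ and $\bV_{Y}$ is a subquotient of $R^{i}h_{*}\Z$ with $h$ defined over $K$; thus $\iota$ is geometrically defined over $K$. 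Everything else — rigidity, effectivity of the descent, the multisection/Leray computation, equisingularity — is routine.
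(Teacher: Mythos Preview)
Your argument is correct and reaches the same conclusions, but the routes in parts (1) and (2) differ from the paper's. For (1), you slice by a multisection $Y'\subset X$ finite \'etale over $Y$ and use the trivial Leray collapse for the finite map $f|_{Y'}$; the paper instead shrinks so that $f$ is smooth, works with the full composite $f\circ g\colon Z\to Y$, and invokes degeneration of the Leray spectral sequence at $E_2$ to exhibit $f_*\bV=\bV_Y$ as a subquotient of $R^i(f\circ g)_*\Z$. Your approach is more elementary in that it avoids any nontrivial degeneration statement, at the cost of the multisection construction; the paper's is shorter and makes part (3) immediate once $f$ is over $K$. For (2), you sketch a direct Galois descent --- rigidity of $(Y,f)$, $\Aut(\C/K)$-stability of the isoperiodic relation via its de Rham characterization, then effective descent --- whereas the paper simply cites Urbanik \cite[Theor.~1.6]{urbanik-image-model} (stated for $\oQ$ and pure variations, but whose proof, the paper notes, goes through for any algebraically closed $K\subset\C$ and adapts to the mixed case). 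Your sketch is morally the content of that theorem and you rightly flag it as the non-formal step; citing \cite{urbanik-image-model} rather than \cite{urbanik-absolute} would close it cleanly and also covers the non-geometric case that part (2) requires. For the last clause of (2) and for (3) the two proofs then run in parallel: once $f$ is over $K$, $\iota$ is the period map of $f_*\bV$ (a subquotient of $R^i(f\circ g)_*\Z$), hence defined --- respectively geometrically defined --- over $K$.
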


\begin{proof} 
Suppose that $\varphi$ is geometric. Replacing $X$ by a Zariski-dense open subset if necessary, there exists $g: Z \lo X$ a flat surjective equisingular morphism such that $\varphi$ is the period map of a subquotient $\bV$ of $R^ig_* \Z$. Replacing $X$ and $Y$ by Zariski-dense open subsets if necessary, we can assume without loss of generality that $f$ is smooth, hence $f \circ g: Z \lo Y$ is flat surjective equisingular. The map $\iota: Y \lo \Gamma \backslash \cD$ is the period map of the $\Z$VMHS $f_* (\bV)$ on $Y$. But $f_* (\bV)$ is a subquotient of $R^i(g \circ f)_*\Z$, as the Leray spectral sequence for $f \circ g$ degenerates at the $E_2$ page. This shows that $\iota$ is geometric.

Suppose that $\varphi$ is defined over an algebraically closed field $K \subset \C$. When $K = \oQ$ and $\varphi$ is pure, Urbanik \cite[Theor. 1.6]{urbanik-image-model} shows that both $Y$ and $f: X \lo Y$ are defined over $\oQ$ (this is where we need to work with the normalized image of $\varphi$ rather than its image). His proof works verbatim for any algebraically closed field $K \subset \C$, and can be easily adapted to the mixed case. Thus $Y$ and $f:X \lo Y$ are defined over~$K$ as soon as $\varphi$ is. Again, $\iota: Y \lo \Gamma \backslash \cD$ is the period map of the $\Z$VMHS $f_* (\bV)$ on $Y$, which is then defined over~$K$ as both $f$ and $\bV$ are. Thus $\iota$ is defined over~$K$.

Suppose that $\varphi$ is geometrically defined over~$K$. In particular, by \Cref{geometrically defined= defined}, $\varphi$ is defined over~$K$. Hence both $Y$ and $f: X \lo Y$ are defined over $K$. The map $\iota: Y \lo \Gamma \backslash \cD$ is the period map of the $\Z$VMHS $f_* (\bV)$ on $Y$, which is a subquotient of the $\Z$VMH $R^i(g \circ f)_*\Z$ defined over~$K$. Hence $\iota$ is geometrically defined over~$K$.
\end{proof}

\begin{lemma} \label{geometry3}
Let $X$ be a smooth irreducible quasi-projective variety over $\C$, $\varphi: X^\an \lo \Gamma \backslash \cD$ a period map on $X$, and 
\[\varphi' : X^\an \stackrel{\varphi}{\lo} \Gamma \backslash \cD \twoheadrightarrow \Gamma' \backslash \cD'\] a quotient of $\varphi$.
If $\varphi$ is geometric, then $\varphi'$ is also geometric. If moreover $\varphi$ is defined over an algebraically closed field $K \subset \C$, then $\varphi'$ too.
\end{lemma}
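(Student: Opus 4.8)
The plan is to leverage the two results already in hand: \Cref{fields of definition}(1,2), which handles the passage to the normalized image, and the observation that a quotient of a period map factors through the normalized image in a compatible way. First I would record the geometric input. By \Cref{fields of definition}(1), the factorized period map $\varphi: X^\an \stackrel{f^\an}{\lo} Y^\an \stackrel{\iota}{\lo} \Gamma\backslash\cD$ has geometric $\iota$, so it suffices to treat the case where $X = Y$, i.e. $\varphi$ itself is a closed analytic immersion after normalization; and when $\varphi$ is defined over $K$, \Cref{fields of definition}(2) gives that $Y$, $f$ and $\iota$ are all defined over $K$, so again we reduce to $X=Y$ defined over $K$. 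Thus from now on $\varphi$ realizes $X^\an$ as the normalized image, $\varphi$ is the period map of a geometric $\Z$VMHS $\bV$ on $X$ (a subquotient of $R^i g_*\Z$ for some flat surjective equisingular $g: Z \lo X$), and in the defined-over-$K$ case $g$ (hence $\bV$) is defined over $K$.

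The next step is to identify $\varphi'$ with the period map of a subquotient of $\bV$. The quotient Hodge datum $(\bfG, \cD) \twoheadrightarrow (\bfG', \cD')$ corresponds, on the level of the very general fiber, to a surjection of mixed Hodge structures, equivalently (by Mumford--Tate/Tannakian formalism) $\bV'$ is the $\Z$VMHS whose monodromy-invariant tensors are exactly those cut out by $\bfG'$; concretely $\bV'$ is realized as a sub-$\Z$VMHS of some tensor construction $\bV^\otimes$ on which $\bfG$ acts through $\bfG'$, or dually as a quotient. Either way $\bV'$ is a subquotient of a tensor construction $\bigoplus \bV^{\otimes a_j} \otimes (\bV^\vee)^{\otimes b_j}$, and its period map is precisely $\varphi'$. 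Since $\bV$ is a subquotient of $R^i g_*\Z$, any tensor construction on $\bV$ is a subquotient of the corresponding tensor construction on $R^i g_*\Z$, which is itself a subquotient of $R^\bullet h_* \Z$ for a suitable fiber power $h: Z^{\times_X m} \lo X$ (or a relative product of the universal family with its dual, using that cohomology of a product is a subquotient of a shifted cohomology of the total space via Künneth); hence $\bV'$ is a subquotient of some $R^j h_* \Z$ with $h$ flat surjective equisingular. If $g$ is defined over $K$ then so are all these fiber powers and the relevant algebraic cycles/correspondences, so $\bV'$ is defined over $K$ and $\varphi'$ is geometrically defined over $K$; by \Cref{geometrically defined= defined} this gives "defined over $K$" in the sense of \Cref{def2}, which is the conclusion we want.

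The main obstacle I anticipate is the bookkeeping in the middle step: making precise that the quotient Hodge datum really does correspond to a subquotient of a tensor construction on $\bV$, and that tensor constructions and fiber powers stay within the class "subquotient of $R^\bullet(\text{flat surjective equisingular})_*\Z$" --- in particular handling the $\Z$-structure (not just $\Q$), keeping track of weight filtrations so that the construction is genuinely a morphism of \emph{mixed} Hodge structures, and checking equisingularity is preserved under fiber products. None of this is deep, but it is where the care is needed; the field-of-definition claim then comes essentially for free, since every operation used (tensor, dual, fiber power, taking a sub or quotient cut out by an algebraic correspondence) is defined over the field over which $g$ is defined, and \Cref{subquotient} already packages the descent of period maps of subquotients. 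I would also remark that the "$K$ algebraically closed" hypothesis is used exactly as in \Cref{subquotient}/\Cref{fields of definition}, to know that the isotypic components and the normalized image descend to $K$ rather than merely to a finite extension.
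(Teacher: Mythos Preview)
Your approach is correct in outline and reaches the same conclusion, but the paper takes a more concrete route that avoids the general Tannakian existence claim. Rather than appealing to an unspecified faithful representation of $\bfG'$ realized in $\bV^\otimes$, the paper uses the adjoint representation directly: $\bV_{\mathfrak{g}} \subset \textnormal{End}\,\bV$ is the sub-$\Z$VMHS associated to $\Ad:\bfG \to \GL(\mathfrak{g})$, and since the kernel $\bfN$ of $\bfG \twoheadrightarrow \bfG'$ has Lie algebra $\mathfrak{n}$ an ideal of $\mathfrak{g}$, one obtains a sub-$\Z$VMHS $\bV_{\mathfrak{n}}$ and the quotient $\bV_{\mathfrak{g}'} = \bV_{\mathfrak{g}}/\bV_{\mathfrak{n}}$, whose period map is exactly $\varphi'$. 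Geometricity then needs only the single fiber product $g\times g: Z\times_X Z \to X$ (since $\textnormal{End}\,\bV$ is a subquotient of $R^{2i}(g\times g)_*\Z$), rather than an indeterminate iterated fiber power. This sidesteps most of the bookkeeping you flag in your last paragraph.

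Two smaller points. First, the reduction to $X=Y$ via \Cref{fields of definition} is unnecessary: nothing in the argument uses that $\varphi$ is an immersion. Second, for the field-of-definition claim you assume that $g$ is defined over $K$, i.e.\ that $\varphi$ is \emph{geometrically} defined over $K$; but the lemma only assumes $\varphi$ is defined over $K$ in the sense of \Cref{def2}, which does not give you a $g$ over $K$. The fix is immediate and you already have the tool: if $\bV$ is defined over $K$ then so is $\textnormal{End}\,\bV$ (or any tensor construction), and \Cref{subquotient} then gives that the period map of any subquotient --- in particular $\varphi'$ --- is defined over $K$, without ever touching $g$.
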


\begin{proof}

Suppose that $\varphi$ is geometric. After replacing $X$ by a Zariski-dense open subset if necessary, there exists $g: Z \lo X$ a flat surjective equisingular map such that $\varphi$ is the period map of a subquotient $\bV$ of $R^ig_*\Z $. Thus $\textnormal{End}\, \bV$ is also geometric, as a subquotient of $R^{2i}(g \times g)_*\Z$. 

Let $\Ad: \bfG \lo \GL(\mathfrak{g})$ be the adjoint representation of the generic Mumford-Tate group of $\bV$, and $\bV_{\mathfrak{g}} \subset \textnormal{End}\, \bV$ the associated $\Z$VMHS. Thus $\bV_{\mathfrak g}$ is geometric.

Let $\bfN \subset \bfG$ be the kernel of the projection $\bfG \twoheadrightarrow \bfG'$, and $\mathfrak{n}$ its Lie algebra. As $\mathfrak{n}$ is an ideal in $\mathfrak{g}$, it is stable under the representation $\textnormal{Ad}: \bfG \lo \GL(\mathfrak{g})$, hence defines a sub-$\Z$VMHS $\bV_{\mathfrak{n}} \subset  \bV_{\mathfrak{g}}$. The quotient $\bV_{\mathfrak{g}'}= \bV_{\mathfrak{g}}/\bV_{\mathfrak{n}}$ is thus geometric. But its period map is $\varphi'$, hence $\varphi'$ is geometric.

\medskip
If $\varphi$ is defined over an algebraically closed subfield $K \subset \C$, it follows from the previous construction and \Cref{fields of definition} that $\varphi'$ too.

\end{proof}

\subsection{The proof}
\begin{proof}[\unskip\nopunct]
We assume that all geometric $\Z$VMHS are absolute.
Consider the collection $\cC$ of factorized geometric period maps 
\[\varphi: X^\an\stackrel{f^\an}{\lo} Y^\an \stackrel{\iota}{\lo} \Gamma \backslash \cD,\] as defined in \Cref{def0}, such that no non-trivial quotient of $\varphi$ factorizes over $\oQ$. 

\medskip
To any $\varphi \in \cC$ we can associate its transcendence degree $\td(\varphi)$: the minimal transcendence degree over $\oQ$ of an algebraically closed field $K \subset \C$ over which $\varphi$ is geometrically defined. Hence the assumption that $\varphi$ does not factorize over~$\oQ$ implies that the transcendence degree of any $\varphi \in \cC$ is at least~$1$.

\medskip
We want to show that for any $\varphi \in \cC$, the set 
$\HL(Y, \iota)_{\maxatyp, \, \zero}$ of maximal atypical special points of $Y$ is not Zariski-dense in $Y$. Suppose by contradiction that this is not the case, and let $\varphi\in \cC$ be a counterexample of minimal transcendence degree $n>1$. Let $K \subset \C$ be an algebraically closed field of definition of some $g:Z \lo X$, of transcendence degree~$n$ over $\oQ$, such that $\varphi$ is the period map of a subquotient $\bV$ of $R^ig_* \Z$. In particular, it follows from \Cref{fields of definition} that $f: X \lo Y$ is defined over~$K$.

\medskip
Let $L\subset K$ be an algebraically closed subfield such that $K$ has transcendence degree $1$ over $L$. Let $$\cZ\stackrel{G}{\rightarrow} \cX \stackrel{\pi}{\rightarrow} \cB$$ be a spread-out of $g: Z \lo X$, where $\cB$ is a smooth irreducible curve defined over $L$. Without loss of generality, we can assume 
\begin{enumerate}
    \item The morphisms $\pi:\cX\rightarrow \cB$ and $G: \cZ\rightarrow \cX$ are surjective flat equisingular, defined over $L$.
    \item The variety $X$ identifies with (the base change to $\C$ of) a fiber $\pi^{-1}(\{b\})$ of $\pi$ over a $K$-point $b$ which is $L$-Zariski dense in $\cX$. In particular, $X$ is $L$-Zariski dense in $\cX$.
\end{enumerate}

Without loss of generality we can assume that the $\Z$VMHS $\bV$ subquotient of $R^ig_* \Z$ extends to a subquotient $\tilde{\bV}$ of $R^i G_* \Z$. Otherwise the exceptional subquotient $\bV$ which exists on the hypersurface $X$ of $\cX$ but not on a very general point of $\cX$ makes $X$ a special subvariety of $\cX$ for $R^i G_* \Z$. Under our assumption that all geometric $\Z$VMHS are absolute, it follows that $X\subset \cX$ is a closed subvariety defined over~$L$. This is a contradiction to~$(2)$.

\medskip
Let $(\cG_{\cX},\cD_{\cX})$ be the generic Hodge datum of the $\Z$VMHS $\tilde{\bV}$ on $\cX$, and \[\Phi: \cX^\an \stackrel{F^\an}{\lo} \cY^\an \stackrel{\tilde{\iota}}{\lo} \Gamma_\cX \backslash \cD_{\cX}\] its factorized period map. Thus, according to \Cref{fields of definition}, $F: \cX \lo \cY$ is defined over $L$. Moreover, by construction, $\Phi_{|X^\an} = \varphi$, $Y \subset \cY$, and $(\cG,\cD)\subset (\cG_{\cX},\cD_{\cX})$ is a sub-Hodge datum. 
Suppose that $(\cG_{\cX},\cD_{\cX}) \neq (\cG,\cD)$. Thus $X \subset \HL(\cX, \Phi)$ is a special subvariety of $\cX$ for $\Phi$. Once again, our assumption that all geometric $\Z$VMHS are absolute implies that $X\subset \cX$ is defined over~$L$, a contradiction to~$(2)$. Thus $(\cG_{\cX},\cD_{\cX})= (\cG,\cD)$. 

\medskip
There are two cases:

\medskip
{\bf Case 1:} $\cY=Y$. Thus $\tilde{\iota}= \iota$. But then $\Phi \in \cC$ has transcendence degree at most $n-1$ and $\HL(\cY, \tilde{\iota})_{\maxatyp, \, \zero}= \HL(Y, \iota)_{\maxatyp, \, \zero}$ is Zariski-dense in $\cY$. This is a contradiction to the minimality of~$n$ as the transcendence degree of a counterexample.

\medskip
{\bf Case 2:} $\dim(\cY)=\dim(Y)+1$. 
We argue as in the Shimura case.
For $y\in\Lambda$, let $S_y\subset \Gamma\backslash \cD$ be the unique Hodge subvariety such that $\{y\}$ is a component of the preimage of $S_y$ by $\varphi$. Let $T_y=\tilde{\iota}^{-1}(S_y)^{0}$ be one of the corresponding special subvarieties of $\cY$ for $\tilde{\iota}$ passsing through $y\in Y^{\an}\subset \cY^{\an}$. 

Without loss of generality, we can again assume that $T_y$ is positive dimensional for all $y \in \Lambda$. Otherwise $T_y = y$ for $y \in \Lambda' \subset \Lambda$ still Zariski-dense in $Y$. As $T_y$ is defined over~$L$ under our assumption that all geometric $\Z$VMSHs are absolute, it would follow that the Zariski-closure $Y = \bigcup_{y \in \Lambda} \{y\}$ is a subvariety of $\cY$ defined over~$L$. But then $X$, which is an irreducible component of $F^{-1}(Y)$, is also defined over~$L$ as $F$ and $Y$ are. This is again a contradiction to~$(2)$. 

It follows that for all $y \in \Lambda$, $T_y$ is an atypical positive dimensional subvariety of $\cY$ for $\tilde{\iota}$. Moreover, the union of $T_y$ for $y\in\Lambda$ is Zariski dense in $\cY$. Again, according to \Cref{mono-atyp}, we can assume that there exists a family $(f':W'\rightarrow B',\pi':W'\rightarrow \cX, \bfN)$ of maximal monodromy atypical subvarieties of $\cX$ for $\Phi$ such that $T_y$ is contained in a fiber of $f_i$ for all $y \in \Lambda$. The group $\bfN$ is the algebraic monodromy group of the fibers of $f'$.

\medskip
Let $(\bfG', \cD')$ be the quotient Hodge datum of $(\bfG, \cD)$ by $\bfN$. We thus obtain the following diagram (the left hand side is in the algebraic category, while the right hand side is in the analytic one, for simplicity we did not write the symbol $\cdot^\an$ where needed).
\[
\xymatrix{
Z\ar[dr]^g \ar@{^(->}[r]& \cZ \ar[dr]^G &  &\\
&X \ar@{^(->}[r] \ar[d]_f &\cX \ar[d]_F \ar[rd]^{\Phi} & \\
& Y \ar[d] \ar@{^(->}[r]  &\cY \ar[r]^{\tilde{\iota}} \ar[d]_{q} & \Gamma \backslash D \ar@{->>}[d]^p \\
&Y'\ar@{^(->}[r] & \cY' \ar[r]_{\tilde{\iota '}} &\Gamma' \backslash D'.
}
\]
Here $\cY'$ is the normalized image of the period map $p \circ \Phi$. It follows from \Cref{geometry3} that $p \circ \Phi$ is geometric, defined over the same field $L$ as $\Phi$. In particular $q: \cY \lo \cY'$ is defined over~$L$ according to \Cref{fields of definition} 

\medskip Notice that for each $y \in \Lambda$, $q(T_{y})= q(y)$ is a maximal atypical special point $y'$ of $\cY'^\an \stackrel{\tilde{\iota'}}{\lo} \Gamma'\backslash \cD'$. It is defined over~$L$ as $T_y$ and $q$ are. Let $\Lambda' \subset \cY'$ be the set of such points. As $\Lambda$ is Zariski-dense in $Y$ and $Y$ is $L$-Zariski-dense in $\cY$, it follows that $\Lambda'$ is Zariski-dense both in $\cY'$ and $Y'= q(Y)$. Thus $Y' = \cY'$, $ \iota' =\tilde{\iota'}$, and the non-trivial quotient $\iota'$ of $\varphi$ is defined over~$L$. 

\medskip
If $n=1$, this is a contradiction to the hypothesis that $\varphi$ has no non-trivial quotient defined over $\oQ$. 

\medskip
If $n>1$: as any quotient of $\iota'$ is a quotient of $\varphi$, and $\varphi$ has no non-trivial quotient defined over~$\oQ$, $\iota'$ has also no non-trivial quotient defined over~$\oQ$. Thus $\iota' \in \cC$. But $\Lambda'$, in particular $\HL(Y', \iota')_{\maxatyp, \zero}$ is Zariski-dense in $Y'$. As $\iota'$ is defined over the field $L$ of transcendence degree $n-1$ over $\oQ$, this is a contradiction to the minimality of~$n$ as the transcendence degree of a counterexample.

\end{proof}

\section{Proof of \Cref{main-geometric}}

\begin{proof}[\unskip\nopunct]
This follows immediately from the proof of \Cref{main-absolute}.
\end{proof}

\bibliographystyle{alpha}
\bibliography{bibliographie}

\end{document}